\numberwithin{equation}{section}
\newtheorem{theorem}{Theorem}[section]
\newtheorem{lemma}[theorem]{Lemma}
\newtheorem{proposition}[theorem]{Proposition}
\newtheorem{corollary}[theorem]{Corollary}
\newtheorem{problem}{Problem}
\newtheorem{maintheorem}{Main Theorem}
\theoremstyle{definition}
\newtheorem{definition}[theorem]{Definition}
\theoremstyle{remark}
\newtheorem{remark}[theorem]{Remark}
\newtheorem{acknowledgement}{Acknowledgement}
\newcommand{\coker}{\operatorname{coker}}
\newcommand{\fm}{\frak{m}}
\newcommand{\fn}{\frak{n}}
\begin{document}
\title[Integral perfectoid big Cohen-Macaulay algebras via Andr\'e'S theorem]
{Integral perfectoid big Cohen-Macaulay algebras via Andr\'e's theorem}

\author[K.Shimomoto]{Kazuma Shimomoto}
\email{shimomotokazuma@gmail.com}

\thanks{2010 {\em Mathematics Subject Classification\/}: 11S15, 13A35, 13B22, 13B40, 13F35, 13H10, 13J10}

\keywords{Big Cohen-Macaulay algebra, Fontaine ring, Frobenius map, Witt vectors}

%\subjclass{13}
%\subjclass[2000]{Primary 13-XX}
%\subjclass[2000]{Primary ; Secondary}
%\date{\today \, (\printtime)}
%\date{\today}

\begin{abstract}
The main result of this article is to prove that any Noetherian local domain of mixed characteristic maps to an integral perfectoid big Cohen-Macaulay algebra. The proof of this result is based on the construction of almost Cohen-Macaulay algebras in mixed characteristic due to Yves Andr\'e. Moreover, we prove that the absolute integral closure of a complete Noetherian local domain of mixed characteristic maps to an integral perfectoid big Cohen-Macaulay algebra.
\end{abstract}

\maketitle 

\tableofcontents

\section{Introduction}

Let $(R,\fm)$ be a Noetherian local ring and let $M$ be an $R$-module. Then $M$ is a \textit{big Cohen-Macaulay R-module}, if $M \ne \fm M$ and there is a system of parameters $\mathbf{x}=x_1,\ldots,x_d$ of $R$ such that $\mathbf{x}$ is a regular sequence on $M$. To be precise, one should call $M$ a big Cohen-Macaulay $R$-module with respect to $\mathbf{x}$. The notion of big Cohen-Macaulay modules was introduced by Hochster in 1970's for the purpose of studying the homological conjectures (see \cite{H07} for a brief survey and \cite{HH95} in the equal characteristic case). Hochster proved that any local ring $(R,\fm)$ of \textit{equal} characteristic admits a big Cohen-Macaulay module. The issue whether such a module exists in mixed characteristic has remained unclear for a long time. Quite recently, Andr\'e proved that big Cohen-Macaulay algebras exist in the mixed characteristic case in \cite{An1} and \cite{An2}, which also implies that the Direct Summand Conjecture is fully settled (see \cite{B16} for the derived variant of this conjecture).

\begin{theorem}[Y. Andr\'e]
\label{MainAndre}
Every complete Noetherian local domain of mixed characteristic admits a big Cohen-Macaulay algebra.
\end{theorem}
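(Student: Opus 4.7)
The strategy is to combine a reduction to the regular local case, a perfectoid construction of an almost Cohen-Macaulay algebra (which is the deep input due to André), and Hochster's classical modification procedure for passing from an almost Cohen-Macaulay algebra to an honest big Cohen-Macaulay algebra.

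First, by Cohen's structure theorem, every complete Noetherian local domain $R$ of mixed characteristic is module-finite over a complete regular local ring $A = V[[x_2, \ldots, x_d]]$, where $V$ is a complete discrete valuation ring with uniformizer $p$. A big Cohen-Macaulay $A$-algebra automatically yields a big Cohen-Macaulay $R$-algebra, because the system of parameters $p, x_2, \ldots, x_d$ of $A$ maps to a system of parameters of $R$ under the module-finite extension. Hence the problem reduces to constructing a big Cohen-Macaulay algebra over $A$. To move into the perfectoid world, I would pass to the $p$-adic completion of $A_\infty := A[p^{1/p^\infty}, x_2^{1/p^\infty}, \ldots, x_d^{1/p^\infty}]$, which is an integral perfectoid algebra containing $A$.

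The heart of the proof, and the truly difficult step, is to produce over this perfectoid base an algebra $T$ that is \emph{almost} Cohen-Macaulay with respect to the parameters $p, x_2, \ldots, x_d$, where ``almost'' is taken relative to the ideal generated by $(pg)^{1/p^\infty}$ for a carefully chosen nonzero element $g \in A$ (which one can think of as a discriminant-like element cutting out the obstruction to being regular). André's breakthrough is a perfectoid version of Abhyankar's lemma: starting from a suitable finite cover of the generic fiber, one constructs a perfectoid $A_\infty$-algebra $T$ adjoining all $p^n$-th roots of $g$ in which $p, x_2, \ldots, x_d$ form an almost regular sequence. The argument depends crucially on tilting to characteristic $p$, on Scholze's almost purity theorem in the perfectoid setting, and on a careful choice of integral structures so that the output lives over $A_\infty$ rather than merely over its generic fiber.

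Finally, I would upgrade $T$ from almost to honest Cohen-Macaulay via Hochster's partial algebra modification procedure. Given any relation $r x_{k+1} = \sum_{i \leq k} r_i x_i$ in $T$ that witnesses a failure of the parameters to form a regular sequence, one adjoins formal variables serving as solutions and iterates transfinitely over all such bad relations, producing a direct limit $B$ of $A$-algebras in which $p, x_2, \ldots, x_d$ is by construction regular. The only non-trivial point is to verify $\fm B \neq B$: if on the contrary $1 \in \fm B$, one descends this equation through the finitely many modifications used to produce it, eventually obtaining an element of $T$ that is simultaneously a unit and ``almost zero'' with respect to $(pg)^{1/p^\infty}$, contradicting that the almost-zero ideal of $T$ is contained in its Jacobson radical. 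The main obstacle is the construction of $T$ itself; the rest, while delicate, rests on classical Hochster-style arguments and standard perfectoid manipulations.
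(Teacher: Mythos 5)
Your outline is correct and follows the same skeleton that this paper attributes to Andr\'e: the deep input is the construction, via the Perfectoid Abhyankar Lemma, of an algebra that is almost Cohen--Macaulay with respect to $(pg)^{1/p^\infty}$ (Theorem \ref{Andre}, made precise in Theorem \ref{almostCM}), and the passage from ``almost'' to ``honest'' is Hochster's (partial) algebra modification machinery, with $\fm B \ne B$ secured by almost-zero bookkeeping. Where you diverge from the paper's own detailed argument is in that last step: you run the modifications directly in mixed characteristic --- which is legitimate, and is exactly \cite[Proposition 4.1.2]{An2}, invoked inside the proof of Theorem \ref{seedFontaine} --- whereas the paper tilts to characteristic $p$ via Fontaine rings and applies Dietz's seed theory (Theorem \ref{seed}) before untilting through Witt vectors. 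Your route is shorter for the bare existence statement; the paper's detour through characteristic $p$ is what buys the extra structure in Theorem \ref{integralperfect} (an \emph{integral perfectoid} big Cohen--Macaulay algebra, perfectness after tilting, the factorization through $\overline{R}$), none of which the statement at hand requires.

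Two points of your write-up need repair, though neither is fatal. First, the reduction: a big Cohen--Macaulay $A$-algebra does \emph{not} automatically yield a big Cohen--Macaulay $R$-algebra, since it need not carry an $R$-algebra structure at all. What is true is that an \emph{$R$-algebra} $B$ on which $p, x_2, \ldots, x_d$ is regular with $(p,x_2,\ldots,x_d)B \ne B$ is big Cohen--Macaulay over $R$, because a system of parameters of $A$ is one of $R$; so you must (and, in Andr\'e's construction, do) build $T$ as an algebra over $R$ itself, not merely over $A$. Second, the contradiction closing the modification argument is not that some element of $T$ is ``simultaneously a unit and almost zero'': descending $1 \in \fm B$ through finitely many partial modifications shows that $(pg)^{N/p^n} \in \fm T$ for all $n$, i.e.\ that $T/\fm T$ is almost zero, and it is the clause ``$T/\fm T$ is not almost zero'' in the definition of almost Cohen--Macaulay (Definition \ref{DefAlmost}; compare Lemma \ref{notalmostzero}) that is thereby violated.
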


This theorem can be deduced from Theorem \ref{Andre}, which we explain below. The primary aim of this article is to show abundance of big Cohen-Macaulay algebras with certain distinguished properties based on Theorem \ref{Andre} stated below. First, we state the main theorem (see Theorem \ref{integralperfect}).

\begin{maintheorem}
\label{main}
Let $(R,\fm)$ be a Noetherian local domain of mixed characteristic. Then there exists an $R$-algebra $T$ such that $T$ is an integral perfectoid big Cohen-Macaulay $R$-algebra. 

Moreover, we have the following assertions:
\begin{enumerate}
\item
Assume that $R$ is a complete Noetherian local domain of mixed characteristic with perfect residue field. Then there exists an integral perfectoid big Cohen-Macaulay $R$-algebra $T$ with the property that $R \to T$ factors as $R \to S \to T$, such that $S$ is an integral pre-perfectoid normal domain that is integral over $R$ and $R[\frac{1}{pg}] \to S[\frac{1}{pg}]$ is ind-\'etale for some nonzero element $g \in R$.

\item
Assume that $R$ is a complete Noetherian local domain of mixed characteristic. Let $B$ be an integral almost perfectoid, almost Cohen-Macaulay $R$-algebra such that $R \to B$ factors as $R \to S \to B$ and $S$ is an integral perfectoid algebra containing compatible systems of elements: $
\{p^{\frac{1}{p^n}}\}_{n  \ge 0},\{x_2^{\frac{1}{p^n}}\}_{n  \ge 0},\ldots,\{x_d^{\frac{1}{p^n}}\}_{n  \ge 0}$ for a system of parameters $p,x_2,\ldots,x_d$ of $R$. Then there is a ring homomorphism $S \to T$ such that $T$ is an integral perfectoid big Cohen-Macaulay $R$-algebra.
\end{enumerate}
\end{maintheorem}

One can indeed prove the following stronger result (see Corollary \ref{absoluteBig} and Remark \ref{completion}).

\begin{maintheorem}
Let $(R,\fm)$ be a Noetherian local domain of mixed characteristic and let $R^+$ be its absolute integral closure. Then $R^+$ maps to an integral perfectoid big Cohen-Macaulay $R$-algebra.
\end{maintheorem}

For basic notion and notations, we refer the reader to \S~\ref{notation}.
An \textit{integral perfectoid algebra} is a $p$-torsion free, $p$-adically complete algebras on which the Frobenius endomorphism is surjective modulo $p$ whose kernel is a principal ideal (see Definition \ref{preperfectoid}). See Definition \ref{DefAlmost} for the definition of \textit{almost Cohen-Macaulay algebras}. We point out that Main Theorem \ref{main} is somehow reminiscent of the classical result of Hochster and Huneke \cite{HH92}, which asserts that the absolute integral closure $R^+$ of an excellent local domain $(R,\fm)$ of equal characteristic $p>0$ is a big Cohen-Macaulay algebra, in which $R^+$ is an example of an integral perfectoid algebra in the positive characteristic case. We refer the reader to \cite{HL07}, \cite{Q16} and \cite{SS12} for the recent developments on the Cohen-Macaulayness of the absolute integral closure in positive characteristic. The utility of integral perfectoid algebras is contained in the fact that one can reduce the study of big Cohen-Macaulay algebras in mixed characteristic to that of big Cohen-Macaulay algebras in positive characteristic via tilting operations in favorable circumstances (see Corollary \ref{corFontaine} for more on this). 

The proof of Main Theorem \ref{main} relies heavily on the following remarkable result of Andr\'e, together with the construction of a certain seed algebra over the Fontaine ring, using the Frobenius map.

\begin{theorem}[Y. Andr\'e]
\label{Andre}
Every complete Noetherian local domain of mixed characteristic admits an integral almost perfectoid, almost Cohen-Macaulay algebra.
\end{theorem}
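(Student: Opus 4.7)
The plan is to follow the perfectoid strategy: first reduce to a complete regular local ring by Cohen's structure theorem, then build an explicit perfectoid tower by adjoining $p$-power roots of a system of parameters, and finally invoke André's perfectoid Abhyankar lemma to extend the construction across the generically \'etale finite extension while controlling the loss of Cohen-Macaulayness up to an almost-isomorphism.

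More precisely, given a complete Noetherian local domain $(R,\fm)$ of mixed characteristic $(0,p)$ with $\dim R = d$, choose a coefficient ring so that there is a module-finite extension $A := V[[x_2,\ldots,x_d]] \hookrightarrow R$, where $V$ is a complete DVR with uniformizer $p$. Form the perfectoid closure $A_\infty$ by adjoining compatible systems $\{p^{1/p^n}\},\{x_2^{1/p^n}\},\ldots,\{x_d^{1/p^n}\}$ to $A$ and passing to the $p$-adic completion $\widehat{A_\infty}$. This is a standard integral perfectoid algebra, and a direct calculation shows that $p,x_2,\ldots,x_d$ is a regular sequence on $\widehat{A_\infty}$; in fact $\widehat{A_\infty}$ is genuinely Cohen-Macaulay with respect to this sequence.

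The main step is to transport this Cohen-Macaulay property up to $R$. Choose a nonzero $g \in A$ such that $A[\tfrac{1}{pg}] \to R[\tfrac{1}{pg}]$ is finite \'etale; the presence of $g$ reflects the ramification locus of $R$ over $A$. The key input is Andr\'e's perfectoid Abhyankar lemma: after enlarging $\widehat{A_\infty}$ by adjoining compatible $p$-power roots of $g$ to form a perfectoid algebra $\widehat{A'_\infty}$, the base change $R \otimes_A \widehat{A'_\infty}$ admits an almost faithfully flat embedding into an integral perfectoid $\widehat{A'_\infty}$-algebra $B$, where ``almost'' is measured with respect to $(pg)^{1/p^\infty}$. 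Faithful flatness, combined with the Cohen-Macaulayness of $\widehat{A'_\infty}$ with respect to $p,x_2,\ldots,x_d$, forces $p,x_2,\ldots,x_d$ to be an almost regular sequence on $B$; since these elements form a system of parameters of $R$, the algebra $B$ is the sought-after integral almost perfectoid, almost Cohen-Macaulay $R$-algebra.

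The hardest step will be the Abhyankar-type construction in the previous paragraph: producing the almost \'etale perfectoid extension away from $V(pg)$ while absorbing the wild ramification along $V(g)$ into the tower of $p$-power roots of $g$. This is precisely where Andr\'e's work goes beyond Faltings's almost purity theorem, and the argument proceeds by tilting to characteristic $p$ via Scholze's equivalence, performing the needed root extractions using the Frobenius on the tilt, and untilting back. Verifying that the resulting algebra is genuinely integral perfectoid (rather than merely almost perfectoid) and that the almost regularity of the parameter sequence survives the untilting is the technical crux.
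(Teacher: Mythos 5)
Your outline is, in substance, the strategy of Andr\'e's own proof in \cite{An2}; note that the present paper does not reprove Theorem \ref{Andre} but cites \cite{An2} for it (the closest internal statement, Theorem \ref{almostCM}, likewise defers the construction of the algebras $T_\lambda$ to \cite[\S~3.2 and \S~4.2]{An2}). So the comparison is between your sketch and Andr\'e's argument: reduce via Cohen to a module-finite extension $A=W(k)[[x_2,\ldots,x_d]]\hookrightarrow R$, pass to the perfectoid tower $\widehat{A_\infty}$, choose $g$ with $A[\frac{1}{pg}]\to R[\frac{1}{pg}]$ finite \'etale, adjoin $g^{1/p^\infty}$, and apply the Perfectoid Abhyankar Lemma to obtain an almost finite \'etale, almost faithfully flat extension $B$ of $\widehat{A'_\infty}$ receiving $R$; almost flatness plus regularity of $p,x_2,\ldots,x_d$ on $\widehat{A'_\infty}$ then gives almost vanishing of the relevant Koszul homology, i.e.\ almost Cohen--Macaulayness. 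That matches the intended proof.

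Three points need attention. First, your reduction omits enlarging the residue field: if $k$ is imperfect, $\widehat{A_\infty}$ as you define it is \emph{not} integral perfectoid (Frobenius is not surjective mod $p$), so one must first pass to a faithfully flat local extension $(R,\fm)\to(R',\fm')$ with perfect residue field, exactly as at the start of the proof of Theorem \ref{integralperfect}. Second, the output of the Perfectoid Abhyankar Lemma is only an \emph{almost} perfectoid algebra, which is all that Theorem \ref{Andre} asserts; your closing paragraph, which makes ``verifying that the resulting algebra is genuinely integral perfectoid'' the technical crux, is aiming at the wrong target --- that upgrade is not available from the Abhyankar lemma and is precisely the content of this paper's Main Theorem, proved by an entirely different mechanism (tilting, Dietz's seed theorem, and untilting via $W(-)$), not something to be ``verified'' inside Andr\'e's construction. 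Third, Definition \ref{DefAlmost} also requires that $B/\fm B$ not be almost zero with respect to $(pg)^{1/p^\infty}$; this does follow from almost faithful flatness over $\widehat{A'_\infty}$ (or, as in Lemma \ref{notalmostzero}, from $B$ being a seed), but it is part of the statement and should be checked explicitly.
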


Thus, Main Theorem \ref{main} is regarded as a refinement of Theorem \ref{Andre}, which is found in \cite{An2} and its proof uses a deep theorem, \textit{Perfectoid Abhyankar's Lemma} as proved in \cite{An1}. The existence of big Cohen-Macaulay algebras can be deduced from Theorem \ref{Andre} by combining Hochster's method of \textit{(partial) algebra modifications}. The method of (partial) algebra modifications works in a quite flexible manner for rings in positive characteristic, so we need to pass to the positive characteristic via Fontaine rings and then use the results of Dietz \cite{Di07}. We recall that an algebra $T$ over a local Noetherian ring is a \textit{seed}, if it maps to a big Cohen-Macaulay algebra (see Definition \ref{seedalg}). Now let us summarize the key tools that we need in this article.

\begin{enumerate}
\item[$\bullet$]
Andr\'e's theorem on existence of an integral almost perfectoid almost Cohen-Macaulay algebra in mixed characteristic (see Theorem \ref{almostCM} for the precise statement).

\item[$\bullet$]
By the standard method, we are reduced to the assumption that $(R,\fm)$ is a complete Noetherian local normal domain of mixed characteristic with perfect residue field in Main Theorem \ref{main} (see the beginning of the proof of Theorem \ref{integralperfect}).

\item[$\bullet$]
We make use of Fontaine rings and their Witt vectors. In the language of perfectoid spaces, these correspond to the operations \textit{tilting} and \textit{untilting} in perfectoid geometry.

\item[$\bullet$]
A seed over a Noetherian local ring of equal characteristic $p>0$ maps to an absolutely integrally closed, big Cohen-Macaulay $R$-algebra domain. This is due to Dietz (see Theorem \ref{seed}). 
\end{enumerate}

\section{Notation and conventions}
\label{notation}

All rings are assumed to be commutative and unitary. A \textit{quasilocal ring} is a commutative ring with a unique 
maximal ideal. A \textit{local ring} is a Noetherian commutative ring $R$ with a unique maximal ideal $\fm$, which is denoted by $(R,\fm)$. Let $p>0$ be a prime number and let $A$ be a ring. We denote by $A^{\wedge}:=\varprojlim_{n>0} A/p^nA$ the $p$-adic completion of $A$. Let $(R,\fm)$ be a local Noetherian ring and let $M$ be an $R$-module. Then we say that $M$ is a \textit{big Cohen-Macaulay R-module}, if $\fm M \ne M$ and there is a system of parameters of $R$ that is a regular sequence on $M$. A big Cohen-Macaulay $R$-module $M$ is \textit{balanced}, if every system of parameters of $R$ is a regular sequence on $M$. Later, we shall recall the definition of \textit{almost Cohen-Macaulay algebras} following \cite{An2}. For any ring $B$, let $W(B)$ denote the ring of $p$-typical Witt vectors. When there is no danger of confusion, we omit to add ``$p$-typical'' to the Witt vectors. The book \cite{S73} is a standard reference for the Witt vectors. Let $A$ be an $\mathbb{F}_p$-algebra for $\mathbb{F}_p:=\mathbb{Z}/p\mathbb{Z}$ and a prime number $p>0$. Then we say that $A$ is a \textit{perfect $\mathbb{F}_p$-algebra}, if the Frobenius endomorphism $F_A:A \to A~(F_A(a)=a^p)$ is bijective. Let $A$ be an integral domain. We define the \textit{absolute integral closure} of $A$, denoted by $A^+$, to be the integral closure of $A$ in a fixed algebraic closure of the field of fractions of $A$ (see \cite{Ar71} for details). Fix a prime number $p>0$. A \textit{compatible system of elements} is a sequence of elements $\{x_n\}_{n \ge 0}$ in a ring $A$ such that $x_{n+1}^p=x_n$ for all $n \ge 0$.

\section{Fontaine rings and the Witt vectors}

We follow the article \cite{Sh11} for the treatment of Fontaine ring associated to a ring.

\begin{definition}[Fontaine ring]
Let $A$ be a commutative ring and let $p>0$ be a prime number. The \textit{Fontaine ring} (or \textit{tilt})\footnote{"Tilt" is a standard name now. In \cite{Sh11}, "Fontaine ring" was employed and $\mathbf{E}(A)$ was used for the Fontaine ring. As this symbol is obsolete now, we decided to switch to a standard one.} associated to $A$ is defined as the inverse limit:
$$
A^\flat:=\varprojlim_{n \in \mathbb{N}} A_n,
$$
where $A_n:=A/pA$ for all $n \in \mathbb{N}$ and the transition map $A_{n+1} \to A_n$ is defined by $x \mapsto x^p$ (the Frobenius endomorphism).
\end{definition}

It is easy to check from the definition that $A^\flat$ is a nonzero perfect $\mathbb{F}_p$-algebra if $A/pA \ne 0$. An element in the Fontaine ring $A^\flat$ can be written as a vector $(a_0,a_1,a_2,\ldots)$ such that $a_n \in A_{n+1}$ and $a_{n+1}^p=a_n$. Fix an element $a \in A$. Then we often use the symbol $a^\flat:=(a_0,a_1,a_2,\ldots) \in A^\flat$ such that $a_0$ is the image of $a$ in $A/pA$, when a specific choice of the sequence $\{a_n\}_{n \ge 0}$ is not important or does not cause any confusion. For example, if $p^{\frac{1}{p^m}} \in A$ for $m>0$, we write
$p^\flat:=(p,p^{\frac{1}{p}},\ldots) \in A^\flat$.\footnote{It is probably better to write $p^\flat:=(\overline{p},\overline{p^{\frac{1}{p}}},\ldots) \in A^\flat$, but we make a simple choice of notation.} Scholze introduced \textit{perfectoid K-algebras} in \cite{Sch12}, where $K$ is a perfectoid field. In this article, we deal with a version introduced by Bhatt-Morrow-Scholze in \cite[Definition 3.5]{BMS17}.\footnote{Fontaine, Gabber-Ramero and Kedlaya-Liu also considered versions of perfectoid algebras.} The monographs \cite{GR} and \cite{KL15} provide comprehensive information on perfectoid rings and spaces.

\begin{definition}
\label{preperfectoid}
Let $A$ is a $p$-torsion free algebra such that $A/pA \ne 0$.

\begin{enumerate}
\item
$A$ is called \textit{integral pre-perfectoid}, if there is an element $\varpi \in A$ such that $\varpi^p=p \cdot u$ for some unit $u \in A$ and the Frobenius endomorphism $A/pA \to A/pA$ is surjective with its kernel being equal to $\varpi A$. Moreover, an integral pre-perfectoid algebra is called \textit{integral perfectoid}, if $A$ is $p$-adically complete.

\item
Assume that there is an element $\varpi \in A$ such that $\varpi^p=p \cdot u$ for some unit $u \in A$. Put $\pi:=pg$ for a nonzero divisor $g \in A$ and assume that there are compatible systems of elements: $\{p^{\frac{1}{p^n}} \in A\}_{n \ge 0}$, $\{g^{\frac{1}{p^n}} \in A\}_{n \ge 0}$ and $\pi^{\frac{1}{p^\infty}}:=\{\pi_n \in A\}_{n \ge 0}$ in the sense that $\pi_{i+1}^p=\pi_i$ and $\pi_0:=\pi$. Then $A$ is called \textit{integral almost perfectoid with respect to $\pi^{\frac{1}{p^\infty}}$}, if $A$ is $p$-adically complete and the Frobenius endomorphism $A/pA \to A/pA$ induces an injection $A/\varpi A \hookrightarrow A/pA$ whose cokernel is annihilated by $\pi^{\frac{1}{p^n}}$ for all $n>0$.
\end{enumerate}
\end{definition}

The reader should notice the following embedding of categories:
$$
\biggl\{\mbox{Integral perfectoid algebras}\biggl\} \to \biggl\{\mbox{Integral pre-perfectoid algebras}\biggl\},
$$
which is fully faithful and its left inverse is given by taking the $p$-adic completion. Let us collect the basic results on $A^\flat$ (see \cite[Proposition 4.5, Lemma 4.6 and Proposition 4.7]{Sh11} for references). They are stated only for the absolutely integrally closed domains in \cite{Sh11}. However, all of these results can be extended to integral pre-perfectoid algebras without any change by adding one assumption that $p^{\frac{1}{p^m}} \in A$ for all $m>0$.\footnote{Precisely speaking, this is not necessary in the general theory of perfectoid geometry. But this extra assumption will be sufficient for the main results in this paper.} The most comprehensive reference that covers these results is \cite{GR}. See \cite[Proposition 4.5]{Sh11} for the following proposition. Fix a prime number $p>0$.

\begin{proposition}
\label{FontaineRing}
Assume that $A$ is an integral pre-perfectoid algebra such that $p^{\frac{1}{p^m}} \in A$ for all $m>0$. Then the following statements hold:
\begin{enumerate}

\item
$A^\flat$ is a perfect $\mathbb{F}_p$-algebra and it is complete in the $p^\flat$-adic topology, where $p^\flat:=(p,p^{\frac{1}{p}},\ldots) \in A^\flat$.

\item
There is a natural ring surjection: $\Phi_A:A^\flat \to A/pA$ defined by the rule $(a_0,a_1,a_2,\ldots) \mapsto a_0$.
Moreover, there is a short exact sequence:
$$
0 \to A^\flat \xrightarrow{\times p^\flat} A^\flat \xrightarrow{\Phi_A} A/pA \to 0.
$$
In particular, the element $p^\flat$ is a nonzero divisor of $A^\flat$.
\end{enumerate}
\end{proposition}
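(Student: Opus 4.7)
My approach is to adapt the argument of \cite[Proposition 4.5]{Sh11}, which treats the case of absolutely integrally closed domains, and to verify that every step only uses the integral pre-perfectoid hypotheses together with the assumption $p^{1/p^m} \in A$ for all $m>0$.

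For part (1), I observe that $A^\flat$ inherits an $\mathbb{F}_p$-algebra structure from the factors $A/pA$. Perfectness is formal: the shift $(a_0,a_1,a_2,\ldots) \mapsto (a_1,a_2,a_3,\ldots)$ is well-defined on $A^\flat$ because the defining compatibility $a_{i+1}^p = a_i$ is preserved, and this shift is a two-sided inverse to the Frobenius endomorphism of $A^\flat$. Completeness of $A^\flat$ in the $p^\flat$-adic topology will follow from part (2): once one knows $A^\flat/p^\flat A^\flat \cong A/pA$, the same argument applied after Frobenius iterates identifies $A^\flat/(p^\flat)^n A^\flat$ with $A/pA$ for all $n \ge 1$, so the topological inverse limit $\varprojlim_n A^\flat/(p^\flat)^n A^\flat$ coincides with the defining inverse limit $\varprojlim_n A/pA = A^\flat$.

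The substance is part (2). The map $\Phi_A$ is manifestly a ring homomorphism. Surjectivity of $\Phi_A$ uses the pre-perfectoid condition that Frobenius is surjective on $A/pA$: given $\bar a \in A/pA$, iteratively pick $p$-th roots $\bar a_i$ with $\bar a_0 = \bar a$ and $\bar a_{i+1}^p = \bar a_i$, producing a preimage $(\bar a_i) \in A^\flat$. To identify $\Ker \Phi_A$ with $p^\flat A^\flat$: the inclusion $\supseteq$ is immediate since the zeroth component of $p^\flat$ is zero. For the reverse inclusion, take $(0,a_1,a_2,\ldots) \in \Ker \Phi_A$. The relations $a_{i+1}^p = a_i$ and $a_0 = 0$ force $a_i^{p^i} = 0$ in $A/pA$, so each $a_i$ lies in $\Ker F^i$ where $F$ denotes Frobenius on $A/pA$. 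Since $p^{1/p^m} \in A$, the element $\varpi = p^{1/p}$ serves as a pre-perfectoid uniformizer, and the pre-perfectoid condition gives $\Ker F = p^{1/p} \cdot A/pA$; a short induction on $i$ using surjectivity of $F$ yields $\Ker F^i = p^{1/p^i} \cdot A/pA$. Consequently one can write $a_i = p^{1/p^i} b_i$ for some $b_i \in A/pA$.

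The main obstacle --- and really the only delicate step --- is to arrange the $b_i$ into a compatible system, namely $b_{i+1}^p = b_i$, so that $(b_0,b_1,b_2,\ldots) \in A^\flat$. Each $b_i$ is pinned down only modulo $\Ann_{A/pA}(p^{1/p^i})$, and in characteristic $p$ one has $(b+y)^p = b^p + y^p$, so the compatibility can be enforced by an inductive correction that uses the fact that the $p$-th power map carries $\Ann_{A/pA}(p^{1/p^{i+1}})$ into $\Ann_{A/pA}(p^{1/p^i})$ and, combined with surjectivity of Frobenius on $A/pA$, exhibits enough of the latter annihilator to absorb the obstruction at each stage. Once the $b_i$ are chosen, multiplication gives $p^\flat \cdot (b_0,b_1,b_2,\ldots) = (0, p^{1/p}b_1, p^{1/p^2}b_2, \ldots) = (0,a_1,a_2,\ldots)$. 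Finally, to see that $p^\flat$ is a nonzero divisor on $A^\flat$, suppose $p^\flat \cdot (y_0,y_1,\ldots) = 0$; then $p^{1/p^i} y_i = 0$ in $A/pA$ for every $i \ge 1$. Lifting $y_i$ to $A$ and using that $p^{1/p^i}$ is a nonzero divisor (as $A$ is $p$-torsion free) gives $y_i = (p^{1/p^i})^{p^i - 1} c_i$ in $A$ for some $c_i$, so $y_{i-1} = y_i^p \equiv 0 \pmod p$ because $(p^{1/p^i})^{p(p^i - 1)}$ is divisible by $p$ for $i \ge 1$. Combining this injectivity with the kernel computation yields the desired short exact sequence.
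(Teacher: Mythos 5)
First, note that the paper itself offers no proof of this proposition beyond citing \cite[Proposition 4.5]{Sh11} and remarking that the extension from absolutely integrally closed domains to integral pre-perfectoid algebras with $p^{1/p^m}\in A$ is routine; so your write-up is attempting exactly what the paper leaves to the reference. Your overall architecture is right, and your treatment of surjectivity of $\Phi_A$, of the inclusion $p^\flat A^\flat\subseteq\ker\Phi_A$, and of the injectivity of multiplication by $p^\flat$ is correct. But the step you yourself flag as ``the main obstacle'' is not actually proved, and the mechanism you gesture at is not the one that works. You propose to absorb the defects $\delta_i:=b_{i+1}^p-b_i\in\Ann_{A/pA}(p^{1/p^i})$ by an inductive correction resting on surjectivity of Frobenius; unwinding this, what you would need is surjectivity of $F:\Ann_{A/pA}(p^{1/p^{i+1}})\to\Ann_{A/pA}(p^{1/p^i})$, and that is not established by anything you say (and is not obviously true). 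The fact that saves the argument is much stronger and goes in the opposite direction: by $p$-torsion-freeness, $\Ann_{A/pA}(p^{1/p^{j}})=p^{1-1/p^{j}}A\bmod p$ for $j\ge 1$, and the $p$-th power of $p^{1-1/p^{j}}t$ is $p^{\,p-1/p^{j-1}}t^p\in pA$; hence $F$ \emph{annihilates} $\Ann_{A/pA}(p^{1/p^{j}})$ for every $j\ge1$. Consequently one may simply replace $b_i$ by $b_{i+1}^p$ (which still satisfies $p^{1/p^i}b_{i+1}^p=(p^{1/p^{i+1}}b_{i+1})^p=a_{i+1}^p=a_i$), and then $(b_{i+2}^p)^p-b_{i+1}^p=\delta_{i+1}^p=0$, so the shifted system is compatible. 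Alternatively, the standard route (and the one implicit in \cite{Sh11}) avoids the issue entirely by working in $A^{\wedge}$ via the multiplicative lift $a_n\mapsto a_n^\sharp$, where $p^{1/p^n}$ is a nonzero divisor, so that $b_n^\sharp:=a_n^\sharp/p^{1/p^n}$ is uniquely determined and automatically compatible; the only thing to check is $b_n^\sharp\in A^{\wedge}$, which follows from $a_n\in\ker F^n$.

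Two smaller points. First, you assert that ``the pre-perfectoid condition gives $\ker F=p^{1/p}\cdot A/pA$,'' but Definition \ref{preperfectoid} only provides \emph{some} $\varpi$ with $\varpi^p=pu$ and $\ker F=\varpi(A/pA)$; you must check that $p^{1/p}$ and $\varpi$ generate the same ideal modulo $p$. This is true but needs a line: from $p^{1/p}\in\ker F=\varpi(A/pA)$ write $p^{1/p}=\varpi c+pd$, raise to the $p$-th power and divide by $p$ (using $p$-torsion-freeness) to get $1\equiv uc^p\pmod p$, so $c$ is a unit in $A/pA$ and $p^{1/p}(A/pA)=\varpi(A/pA)$. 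Second, in the completeness argument the identification $A^\flat/(p^\flat)^nA^\flat\cong A/pA$ holds for $n=p^k$ (via the automorphism $F^{-k}$), not for arbitrary $n$; this suffices since the ideals $((p^\flat)^{p^k})$ are cofinal among $((p^\flat)^n)$, but the statement should be phrased that way.
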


Let $A$ be an integral pre-perfectoid algebra and let $A^{\wedge}$ be the $p$-adic completion of $A$. Then we have $A^{\wedge}/pA^{\wedge} \cong A/pA$. In order to recover the integral perfectoid algebra $A^{\wedge}$ from $A^\flat$, we need the following lemma (see \cite[Lemma 4.6]{Sh11}).

\begin{lemma}
\label{Witt}
Assume that $A$ is an integral pre-perfectoid algebra such that $p^{\frac{1}{p^m}} \in A$ for all $m>0$. Then there is a commutative diagram:
$$
\begin{CD}
A^\flat @>\widehat{\Phi}_{A}>> A^{\wedge} \\
@| @V \pi VV \\
A^\flat @>\Phi_{A}>> A/pA
\end{CD}
$$
in which $\pi$ is the natural projection and $\widehat{\Phi}_{A}$ is uniquely determined such that $\widehat{\Phi}_{A}$ is a multiplicative map and $\widehat{\Phi}_{A}(1^\flat)=1$. In fact, suppose that $A^{\wedge}$ contains a compatible system of elements $\{x^{\frac{1}{p^n}}\}_{n \ge 0}$ for an element $x \in A^{\wedge}$. Then we have $\widehat{\Phi}_{A}(x^\flat)=x$ for $x^\flat=(x,x^{\frac{1}{p}},x^{\frac{1}{p^2}},\ldots) \in A^\flat$.
\end{lemma}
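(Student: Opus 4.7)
The plan is to construct $\widehat{\Phi}_A$ as a Teichmüller-type multiplicative lift, following the classical recipe for the map from the tilt to a perfectoid ring. Given $\alpha=(a_0,a_1,a_2,\ldots)\in A^\flat$ with $a_n\in A/pA$ and $a_{n+1}^p=a_n$, I would choose arbitrary lifts $\widetilde{a}_n\in A$ of $a_n$ and define
\[
\widehat{\Phi}_A(\alpha):=\lim_{n\to\infty}\widetilde{a}_n^{p^n}\in A^{\wedge}.
\]
The workhorse is the elementary congruence: if $b\equiv b'\pmod{p^kA}$, then expanding $b^p=(b'+p^kc)^p$ and using that $p\mid\binom{p}{i}$ for $1\le i\le p-1$ gives $b^p\equiv (b')^{p}\pmod{p^{k+1}A}$. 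Applied iteratively, $b\equiv b'\pmod{p}$ implies $b^{p^n}\equiv (b')^{p^n}\pmod{p^{n+1}}$.

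Using this I would first verify Cauchyness and independence of lifts: since $\widetilde{a}_{n+1}^p$ is a lift of $a_{n+1}^p=a_n$, we have $\widetilde{a}_{n+1}^p\equiv \widetilde{a}_n\pmod p$, hence $\widetilde{a}_{n+1}^{p^{n+1}}\equiv \widetilde{a}_n^{p^n}\pmod{p^{n+1}}$, so $(\widetilde{a}_n^{p^n})_{n\ge 0}$ converges in $A^{\wedge}$; and two choices of lifts give the same limit by the same congruence. Multiplicativity follows from $(\widetilde{a_nb_n})^{p^n}=\widetilde{a}_n^{p^n}\widetilde{b}_n^{p^n}$ and taking limits, and $\widehat{\Phi}_A(1^\flat)=1$ is immediate. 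The commutative diagram is also clear, because the composition $A^\flat\to A^{\wedge}\to A/pA$ sends $\alpha$ to $\widetilde{a}_0\bmod p=a_0=\Phi_A(\alpha)$.

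For uniqueness, let $\psi:A^\flat\to A^{\wedge}$ be any multiplicative map with $\psi(1^\flat)=1$ lifting $\Phi_A$. Since $A^\flat$ is perfect by Proposition~\ref{FontaineRing}, $\alpha$ has a unique $p^n$-th root $\beta_n=(a_n,a_{n+1},\ldots)$, so $\psi(\alpha)=\psi(\beta_n)^{p^n}$; moreover $\psi(\beta_n)$ is a lift of $a_n$ in $A^{\wedge}$. The congruence above then gives $\psi(\alpha)\equiv \widetilde{a}_n^{p^n}\pmod{p^{n+1}}$ for every $n$, forcing $\psi(\alpha)=\widehat{\Phi}_A(\alpha)$. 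Finally, for the compatible-system statement, I would simply take the canonical lifts $\widetilde{a}_n:=x^{1/p^n}\in A^{\wedge}$ of the components of $x^\flat$; then $\widetilde{a}_n^{p^n}=x$ for every $n$, so the limit defining $\widehat{\Phi}_A(x^\flat)$ is $x$.

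The one subtlety worth noting is that in the hypotheses, the lifts $\widetilde{a}_n$ should be taken in $A^{\wedge}$ rather than in $A$, since it is $A^{\wedge}$ which is $p$-adically complete; this is harmless because $A/pA\cong A^{\wedge}/pA^{\wedge}$. The main technical step is the binomial congruence; once that is in hand, existence, multiplicativity, commutativity of the diagram, and uniqueness all reduce to routine limit arguments, and the formula $\widehat{\Phi}_A(x^\flat)=x$ for an element admitting compatible $p$-power roots is automatic from the construction.
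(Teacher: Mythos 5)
Your construction is correct and is exactly the standard Teichm\"uller-type argument (define $\widehat{\Phi}_A(\alpha)=\lim_n \widetilde{a}_n^{\,p^n}$ in $A^{\wedge}$, with the binomial congruence $b\equiv b' \pmod{p^k}\Rightarrow b^p\equiv (b')^p \pmod{p^{k+1}}$ doing all the work for convergence, well-definedness, multiplicativity, and uniqueness); this is the same route as the proof the paper cites from \cite{Sh11}. Your remark that the lifts must be taken in $A^{\wedge}$ (using $A/pA\cong A^{\wedge}/pA^{\wedge}$ and the $p$-adic separatedness of $A^{\wedge}$, which holds since $A$ is $p$-torsion free) is the right point of care, and your reading of the uniqueness clause as uniqueness among multiplicative lifts of $\Phi_A$ is the intended one.
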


Let $W(A^\flat)$ be the ring of $p$-typical Witt vectors of $A^\flat$. Notice that since $A^\flat$ is a perfect $\mathbb{F}_p$-algebra, $W(A^\flat)$ can be characterized by the property that it is a unique $p$-torsion free, $p$-adically complete algebra such that $W(A^\flat)/p W(A^\flat) \cong A^\flat$. If $A \to B$ is a homomorphism of integral pre-perfectoid algebras, we get the following functorial commutative diagram:
$$
\begin{CD}
W(A^\flat) @>>> W(B^\flat) \\
@VVV @VVV \\
A^\flat @>>> B^\flat \\
@V\Phi_A VV @V\Phi_B VV \\
A/pA @>>> B/pB \\
\end{CD}
$$

Let us define the map $\theta_{A^\flat}:A^\flat \to W(A^\flat)$ by letting $\theta_{A^\flat}(a)=(a,0,\ldots,0,\ldots)$. This map is multiplicative, but not additive. This map is called the \textit{Teichm$\ddot{u}$ller lift}. Pick a point $\underline{a}=(a_{0},\ldots,a_{n},\ldots) \in W(A^\flat)$. Since $A^\flat$ is a perfect $\mathbb{F}_p$-algebra, the equation $x^{p^n}=a$ has a root $a^{p^{-n}}$ for any element $a \in A^\flat$. We define the map:
$$
\psi:W(A^\flat) \to A^{\wedge}
$$
by the rule 
$$
\psi(\underline{a})=\sum_{n=0}^{\infty}p^{n} \cdot \widehat{\Phi}_{A}(a_{n}^{p^{-n}}),
$$
in which the right hand side makes sense in $A^{\wedge}$. It is easy to check that $\psi \circ \theta_{A^\flat}=\widehat{\Phi}_{A}$, where $\theta_{A^\flat}$ is as above. The following proposition claims that $\psi$ defines a surjective ring homomorphism (see \cite[Proposition 4.7]{Sh11}).

\begin{proposition}
\label{Teichmuller}
Assume that $A$ is an integral pre-perfectoid algebra such that $p^{\frac{1}{p^m}} \in A$ for all $m>0$. Then the following statements hold.
\begin{enumerate}
\item
$\psi:W(A^\flat) \to A^{\wedge}$ is a ring homomorphism.

\item
There is a short exact sequence:
$$
0 \to W(A^\flat) \xrightarrow{\times \vartheta} W(A^\flat) \xrightarrow{\psi} A^{\wedge} \to 0 \\
$$
for $\vartheta:=\theta_{A^\flat}(p^\flat)-p$. Moreover, $(\vartheta, p)$ (resp. $(p,\varphi)$) forms a regular sequence on $W(A^\flat)$ and there is a natural ring homomorphism:
$$
A \to A^{\wedge} \cong \frac{W(A^\flat)}{\vartheta \cdot W(A^\flat)}.
$$
\end{enumerate}
\end{proposition}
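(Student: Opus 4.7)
The plan is to use the universal property of the Witt vectors of a perfect $\mathbb{F}_p$-algebra, combined with the exact sequence of Proposition \ref{FontaineRing}(2), to realize $A^\wedge$ as a quotient of $W(A^\flat)$ by a single distinguished element lifting $p^\flat$. A preliminary step I would record is that $A^\wedge$ is $p$-torsion free: if $p\cdot(x_n)_{n\ge 1}=0$ in $\varprojlim A/p^n A$, then $p x_{n+1}\equiv 0 \pmod{p^{n+1}}$ forces $x_{n+1}\in p^n A/p^{n+1}A$ (using that $A$ itself is $p$-torsion free), hence $x_n=0$ for all $n$. With $A^\wedge$ thus a $p$-torsion free, $p$-adically complete ring, the surjection $\Phi_A : A^\flat \to A/pA=A^\wedge/pA^\wedge$ lifts uniquely to a ring homomorphism $W(A^\flat)\to A^\wedge$ by the standard strict $p$-ring property of $W(A^\flat)$ for $A^\flat$ perfect; checking this canonical lift against the Teichm\"uller formula on representatives $\theta_{A^\flat}(a^\flat)$ shows that it coincides with $\psi$, giving assertion (1).

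For the exact sequence in (2), I would prove surjectivity, exhibit $\vartheta$ in the kernel, and then show the kernel is generated by $\vartheta$. Surjectivity of $\psi$ follows because $\psi \bmod p$ is $\Phi_A$, which is surjective by Proposition \ref{FontaineRing}(2); $p$-adic completeness of source and target then upgrades this to surjectivity. The identity $\widehat{\Phi}_{A}(p^\flat)=p$ from Lemma \ref{Witt}, combined with $\psi \circ \theta_{A^\flat}=\widehat{\Phi}_{A}$, gives $\psi(\vartheta)=\psi(\theta_{A^\flat}(p^\flat))-p=0$. For the reverse inclusion, given $x\in\ker\psi$, its reduction $\overline{x}\in W(A^\flat)/pW(A^\flat)\cong A^\flat$ satisfies $\Phi_A(\overline{x})=0$, hence $\overline{x}=p^\flat \overline{y}$ by Proposition \ref{FontaineRing}(2); lifting $\overline{y}$ to $y\in W(A^\flat)$ and using $\vartheta\equiv p^\flat \pmod{p}$ yields $x-\vartheta y \in pW(A^\flat)$, so $x-\vartheta y=px_1$. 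Since $A^\wedge$ is $p$-torsion free and $\psi(px_1)=0$, we get $x_1\in\ker\psi$, and iterating produces $x=\vartheta\cdot\sum_{n\ge 0}p^n y_n$, convergent in $W(A^\flat)$ by $p$-adic completeness.

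For the regular sequence assertion, $p$ is a non-zerodivisor on $W(A^\flat)$ by general properties of Witt vectors of a reduced $\mathbb{F}_p$-algebra, and modulo $p$ the element $\vartheta$ reduces to $p^\flat\in A^\flat$, which is a non-zerodivisor by Proposition \ref{FontaineRing}(2); this shows that $(p,\vartheta)$ is a regular sequence. The reversed order $(\vartheta,p)$ is a regular sequence because $W(A^\flat)/\vartheta W(A^\flat)\cong A^\wedge$ is $p$-torsion free by the preliminary step, and $\vartheta$ itself is a non-zerodivisor as a consequence of $p$-adic separatedness of $W(A^\flat)$ together with the regularity of $(p,\vartheta)$. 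The natural map $A\to A^\wedge\cong W(A^\flat)/\vartheta W(A^\flat)$ is then the canonical completion homomorphism. The main obstacle I anticipate is the iterative kernel computation: it simultaneously invokes $p$-adic completeness of $W(A^\flat)$, the $p$-torsion-freeness of $A^\wedge$, and the sharpness of the short exact sequence from Proposition \ref{FontaineRing}(2), and one must bookkeep the remainders $x_n$ carefully so that the ambiguity in the choice of lifts $y_n$ does not obstruct convergence of $\sum_{n\ge 0} p^n y_n$.
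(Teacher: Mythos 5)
Your argument is correct and is essentially the standard construction of Fontaine's map $\theta$ for the Witt vectors of a perfect tilt (lift $\Phi_A$ via the universal property of $W(-)$ for perfect rings, successive approximation for the kernel, and torsion-freeness of $A^{\wedge}$ for the regular-sequence claims), which is precisely the argument behind \cite[Proposition 4.7]{Sh11}, the reference the paper cites in lieu of a proof. No issues.
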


The last assertion in Proposition \ref{Teichmuller} states that $A^{\wedge}$ can be recovered from the Fontaine ring $A^\flat$, if $A$ is an integral pre-perfectoid algebra in a functorial manner.

\section{Big Cohen-Macaulay algebras and seeds in characteristic $p>0$}

In order to construct a balanced big Cohen-Macaulay module from a big Cohen-Macaulay module, the following result is quite useful.

\begin{proposition}
\label{Strooker}
Let $(R,\fm)$ be a Noetherian local ring and let $M$ be a big Cohen-Macaulay $R$-module and let $\widehat{M}$ denote the $\fm$-adic completion of $M$. Then $\widehat{M}$ is a balanced big Cohen-Macaulay $R$-module.
\end{proposition}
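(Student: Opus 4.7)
My plan is to proceed in two stages: first to verify directly that $\widehat{M}$ is a big Cohen-Macaulay $R$-module with respect to the given system of parameters, and then to upgrade this to balanced big Cohen-Macaulayness by exploiting the $\fm$-adic completeness of $\widehat{M}$.

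For the first stage, the canonical isomorphism $\widehat{M}/\fm\widehat{M} \cong M/\fm M$ is immediate from the definition of the completion and shows $\fm\widehat{M} \ne \widehat{M}$, since $M/\fm M \ne 0$ by hypothesis. Let $\fx = x_1,\ldots,x_d$ be the system of parameters of $R$ for which $\fx$ is regular on $M$. Because $\fx$ is $\fm$-primary, the $\fm$-adic and $\fx$-adic topologies on $M$ coincide, so $\widehat{M} = \varprojlim_n M/\fx^n M$. I would then verify by induction on $k \le d$ that $x_1,\ldots,x_k$ is regular on $\widehat{M}$. For the base case $k=1$, if $x_1 \cdot (m_n)_n = 0$ in $\varprojlim_n M/x_1^n M$, then any lift $\widetilde{m}_n \in M$ satisfies $x_1 \widetilde{m}_n \in x_1^n M$, whence $\widetilde{m}_n \in x_1^{n-1} M$ because $x_1$ is a non-zerodivisor on $M$; compatibility of the sequence then forces $(m_n)_n = 0$. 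The inductive step is analogous, after identifying $\widehat{M}/(x_1,\ldots,x_k)\widehat{M}$ with the appropriate completion of $M/(x_1,\ldots,x_k)M$; this identification is valid here because $x_1,\ldots,x_k$ is a regular sequence on $M$, so the relevant inverse system is Mittag-Leffler.

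For the second stage, I would invoke the exchange principle for $\fm$-adically complete modules: any $\fm$-adically separated and complete module that is big Cohen-Macaulay with respect to some system of parameters is big Cohen-Macaulay with respect to every system of parameters. This passes through Cohen's structure theorem, realizing $\widehat{R}$ as a module-finite extension of a complete regular local ring $A$, pulling back a regular system of parameters of $A$ to a distinguished system of parameters of $\widehat{R}$, and comparing an arbitrary system of parameters of $R$ with this distinguished one via the permutability of regular sequences, which becomes available on modules complete in the relevant adic topology.

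The main obstacle is this exchange step: regularity of a sequence is generally fragile under permutation or replacement, and the $\fm$-adic completeness of $\widehat{M}$ is precisely what supplies the rigidity needed to swap one system of parameters for another without losing regularity. All other steps in the argument are either formal manipulations or reduce to direct inverse-limit calculations.
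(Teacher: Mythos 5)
The paper offers no argument of its own here; it simply cites \cite[Theorem 1.7]{BS83}, so you are in effect reproving Bartijn--Strooker. Your outline follows the standard route, but two steps are genuinely incomplete.

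In Stage 1, the deduction ``$x_1\widetilde{m}_n\in \fx^{\,n}M$, whence $\widetilde{m}_n\in \fx^{\,n-1}M$ because $x_1$ is a non-zerodivisor on $M$'' is not valid when $d>1$ (and if you really intend the $x_1$-adic limit $\varprojlim_n M/x_1^nM$, that module is not $\widehat{M}$). Being a non-zerodivisor controls $(0:_Mx_1)$, not $(\fx^{\,n}M:_Mx_1)$. What is needed is quasi-regularity of the whole sequence, i.e.\ Rees's theorem that a regular sequence induces $\gr_{(\fx)}(M)\cong (M/\fx M)[X_1,\dots,X_d]$; from the fact that $X_1$ is a non-zerodivisor on this polynomial ring one reads off $(\fx^{\,n}M:_Mx_1)\subseteq \fx^{\,n-1}M+\bigcap_k\fx^{\,k}M$, which is the inclusion your base case actually uses. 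The same device, not ``Mittag-Leffler'', underlies the identification of $\widehat{M}/(x_1,\dots,x_k)\widehat{M}$ with the completion of $M/(x_1,\dots,x_k)M$: Mittag-Leffler only gives surjectivity of $\widehat{M}$ onto that completion, whereas the real issue is whether the kernel $\varprojlim_n\big((x_1,\dots,x_k)M+\fx^{\,n}M\big)/\fx^{\,n}M$ equals $(x_1,\dots,x_k)\widehat{M}$, and that again rests on the Artin--Rees-type inclusions supplied by quasi-regularity. These gaps are repairable, but they are the actual content of Stage 1.

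Stage 2 is the more serious problem: the ``exchange principle'' you invoke --- an $\fm$-adically complete module that is big Cohen-Macaulay with respect to one system of parameters is big Cohen-Macaulay with respect to every system of parameters --- is precisely the hard half of the theorem being proved, so invoking it is circular unless you prove it. Permutability of regular sequences on separated modules does not yield it: permutation reorders a single sequence, while here one must pass to an unrelated system of parameters. The known arguments require further input: that powers $x_1^{t_1},\dots,x_d^{t_d}$ of a regular sequence remain regular, a chain linking two systems of parameters one element at a time, and a depth-sensitivity statement (Koszul complexes or local cohomology) or the monomial manipulations of \cite{BS83} to justify each single replacement; Cohen's structure theorem by itself transports nothing. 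Unless you supply these steps, the honest course is the paper's: cite \cite[Theorem 1.7]{BS83} (or Bruns--Herzog, Theorem 8.5.1).
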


\begin{proof}
The proof is found in \cite[Theorem 1.7]{BS83}.
\end{proof}

The notion of a seed over a Noetherian local ring was introduced and studied extensively by G. Dietz (see \cite[Definition 3.1]{Di07}). Notice that the definition is characteristic-free.

\begin{definition}
\label{seedalg}
Let $(R,\fm)$ be a Noetherian local ring and let $T$ be an $R$-algebra. Then we say that $T$ is a \textit{seed}, if $T$ maps to a big Cohen-Macaulay $R$-algebra.
\end{definition}

Let us recall the following crucial result from \cite{Di07}.

\begin{theorem}
\label{seed}
Let $(R,\fm)$ be a Noetherian local ring of equal characteristic $p>0$ and suppose that $T$ is a seed over $R$. Then the following hold:
\begin{enumerate}
\item
$T$ maps to a perfect $\fm$-adically complete balanced big Cohen-Macaulay $R$-algebra. 

\item
$T$ can be mapped to an absolutely integrally closed, $\fm$-adically separated, quasilocal balanced big Cohen-Macaulay $R$-algebra domain.
\end{enumerate}
\end{theorem}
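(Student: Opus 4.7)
The plan is to bootstrap from the raw big Cohen-Macaulay algebra $B$ that the definition of ``seed'' provides, using the Frobenius endomorphism (available because $R$ has characteristic $p>0$) to upgrade $B$ successively into algebras with better properties. The two claims should be handled in sequence: first build a perfect $\fm$-adically complete balanced big Cohen-Macaulay algebra, then carry out the extra surgery (localization, modding out a minimal prime, taking the absolute integral closure) required for (2).

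For part (1), fix an $R$-algebra map $T \to B$ with $B$ big Cohen-Macaulay, and let $\mathbf{x}=x_1,\dots,x_d$ be a system of parameters of $R$ that is a regular sequence on $B$. Form the perfection
\[
B_{\infty} := \varinjlim\bigl(B \xrightarrow{F} B \xrightarrow{F} B \xrightarrow{F} \cdots\bigr).
\]
Since $x_1^{p^n},\dots,x_d^{p^n}$ is a regular sequence on $B$ for every $n$ (in any ring, powering a regular sequence preserves regularity), and the $n$-th transition map in the colimit sends $\mathbf{x}$ to $\mathbf{x}^{p^n}$, the sequence $\mathbf{x}$ itself is regular on $B_{\infty}$; hence $B_{\infty}$ is a perfect big Cohen-Macaulay $R$-algebra. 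Now take the $\fm$-adic completion. By Proposition \ref{Strooker}, $\widehat{B_{\infty}}$ is balanced big Cohen-Macaulay. To see that perfectness is preserved, note that on a perfect $\mathbb{F}_p$-algebra the Frobenius is a bijection sending $\fm^n$ into $\fm^{np}$, and since in characteristic $p$ one has $(a-b)^p = a^p-b^p$, any Cauchy sequence $(a_n)$ in $\widehat{B_{\infty}}$ lifts through $F$: the $p$-th roots $c_n = a_n^{1/p}$ already in $B_{\infty}$ satisfy $(c_{n+1}-c_n)^p \in \fm^n$, so $c_{n+1}-c_n \in \fm^{\lceil n/p\rceil}$, giving a Cauchy sequence with the desired $p$-th power.

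For part (2), start with the perfect balanced big Cohen-Macaulay algebra $C := \widehat{B_{\infty}}$ produced above. First pass to a domain quotient: choose a minimal prime $P \subset C$ that does not contain any of the partial ideals $(x_1,\dots,x_i)C :_C x_{i+1}$ obstructing regularity; prime avoidance arguments as in Dietz show that $C/P$ remains big Cohen-Macaulay (and it is still perfect, since the perfect property descends to the reduced quotient and $C$ is reduced). Next, choose a maximal ideal $\fn$ of $C/P$ lying over $\fm$ and localize: $(C/P)_{\fn}$ is a quasilocal big Cohen-Macaulay $R$-algebra domain (the system of parameters remains a regular sequence on the localization because $\fm \subseteq \fn$). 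Finally, let $D$ denote the $\fm$-adic separated quotient of $((C/P)_{\fn})^+$, the absolute integral closure. The crucial input here is Hochster--Huneke type splitting: in equal characteristic $p>0$, passing to the integral closure in an algebraic extension preserves the big Cohen-Macaulay property (one shows $\mathbf{x}$ remains regular modulo $\fm$-separation via a colimit argument over module-finite extensions, using that Frobenius closures of parameter ideals are tight in this setting). The resulting $D$ is the desired absolutely integrally closed, $\fm$-adically separated, quasilocal balanced big Cohen-Macaulay $R$-algebra domain, and it receives a map from $T$ by composition.

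The main obstacle is the last step: proving that the absolute integral closure of the quasilocal big Cohen-Macaulay domain remains big Cohen-Macaulay. The difficulty is that module-finite extensions can a priori introduce new relations that destroy regularity of the parameter sequence; this is precisely what is handled by Hochster--Huneke's tight closure machinery in \cite{HH92} and recast in the seed-theoretic framework by Dietz. Everything else (perfection, completion, prime avoidance, localization) is essentially formal once one keeps track of how regular sequences behave under these operations.
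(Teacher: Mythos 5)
The paper does not actually prove this theorem; it simply cites Dietz \cite{Di07} (Proposition 3.7 for (1) and Theorem 7.8 for (2)), so your proposal is in effect an attempt to reprove Dietz's results. Your part (1) is essentially Dietz's argument and is sound: the Frobenius colimit $B_\infty$ is perfect and the colimit description shows $\mathbf{x}$ stays regular (any relation descends to a finite stage, where $\mathbf{x}^{[p^m]}$ is regular), and the $\fm$-adic completion is balanced by Proposition \ref{Strooker} and remains perfect (this is \cite[Lemma 3.6]{Di07}). One quibble: the implication ``$(c_{n+1}-c_n)^p\in\fm^nB_\infty \Rightarrow c_{n+1}-c_n\in\fm^{\lceil n/p\rceil}B_\infty$'' is unjustified when $\fm$ is not principal; the correct mechanism is that for a finitely generated ideal $I$ with $s$ generators one has $I^{n(s(p-1)+1)}\subseteq (I^n)^{[p]}=F(I^n)B_\infty$, so Frobenius carries the $I$-adic filtration to a cofinal one and hence induces a bijection on completions. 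The conclusion survives, only with a worse exponent.

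Part (2) has genuine gaps, and they are precisely where the content of \cite[Theorem 7.8]{Di07} lies. First, killing a minimal prime: it is not true that a minimal prime $P$ avoiding some finitely many colon ideals yields a big Cohen--Macaulay quotient $C/P$ --- in $C$ the colons $(x_1,\dots,x_i):_C x_{i+1}$ equal $(x_1,\dots,x_i)C$ and are no obstruction at all; the problem is that \emph{new} relations appear in $C/P$, and prime avoidance says nothing about them. What Dietz actually proves, via his ``durable colon-killer'' machinery, is only that $C/P$ is a \emph{seed} for a suitable minimal prime, not that it is big Cohen--Macaulay. Second, the absolute integral closure step likewise only produces a seed (Dietz shows integral extensions of seeds in characteristic $p$ are seeds); mapping that seed onward to a big Cohen--Macaulay algebra destroys absolute integral closedness and the domain property again. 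So a single pass of (domain quotient) $\to$ (localize) $\to$ ($+$) $\to$ (separate) cannot deliver an object that is simultaneously a domain, absolutely integrally closed, quasilocal, and big Cohen--Macaulay; one must iterate the cycle (mod out a minimal prime, take $(-)^+$, map to a big Cohen--Macaulay algebra, repeat) and pass to the direct limit, using that filtered limits of seeds are seeds (\cite[Lemma 3.2]{Di07}). Finally, two smaller slips in your last step: the absolute integral closure of the quasilocal domain $(C/P)_{\fn}$ is not quasilocal unless the base is henselian, and the $\fm$-adic separated quotient of a domain need not be a domain nor need it remain big Cohen--Macaulay without further argument; these properties, too, are arranged in Dietz's limit construction rather than at the end.
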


\begin{proof}
The assertion $\rm(1)$ is due to \cite[Proposition 3.7]{Di07}. The assertion $\rm(2)$ is due to \cite[Theorem 7.8]{Di07}.
\end{proof}

\section{Almost Cohen-Macaulay algebras}

In this section, we construct a certain large integral extension of a complete Noetherian local domain in order to apply the constriction of Fontaine rings. To this aim, we need to consider the \textit{maximal \'etale extension} of a normal domain as discussed in \cite{Sh17}, which we recall briefly. Let $A \hookrightarrow B$ be a ring extension such that $A$ is a normal domain and $B$ is reduced and torsion free over $A$. Then the maximal \'etale extension of $A$ in $B$ is defined as the colimit of all $C$ such that $A \hookrightarrow C \hookrightarrow B$ and $C$ is finite \'etale over $A$. Denote this extension ring by $A_B^{\rm{\acute{e}t}}$, or just $A^{\rm{\acute{e}t}}$ if no confusion is likely to occur. We should keep in mind that
any ring $D$ between $A$ and $A^{\rm{\acute{e}t}}$ may fail the property that $D$ is ind-\'etale over $A$. The following lemma will be useful to construct an integral pre-perfectoid algebra from a $p$-torsion free algebra under mild conditions.

\begin{lemma}
\label{semiperfect}
Assume that $A$ is a $p$-torsion free normal domain such that $pA \ne A$ and $p$ is contained in any maximal ideal of $A$. Let $A \to A^+$ be a natural injection from $A$ into the absolute integral closure $A^+$. Let $B$ be the maximal \'etale extension of $A[p^{-1}]$ such that $B \subset A^+[p^{-1}]$, so that we have $A[p^{-1}] \subset B \subset A^+[p^{-1}]$. Take $\overline{A}$ to be the integral closure of $A$ inside $B$. 

Then $\overline{A}$ is a semiperfect ring. Moreover, there exists a sequence of elements $\pi_n \in \overline{A}$ such that $\pi_{n+1}^p=\pi_n$, $\pi_1^p=p$ and the Frobenius map on $\overline{A}/p\overline{A}$ induces an isomorphism: $\overline{A}/\pi \overline{A} \cong \overline{A}/p \overline{A}$.
\end{lemma}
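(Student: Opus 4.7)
The plan is to proceed in three stages.

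First I would construct the compatible system $\pi_n = p^{1/p^n}$ inside $\overline{A}$. For each $n \ge 0$ the polynomial $Z^{p^n}-p \in A[p^{-1}][Z]$ is \'etale over $A[p^{-1}]$, since its discriminant is a power of $p$ and hence a unit once $p$ is inverted. So any root $p^{1/p^n}$ lies in $B$; being integral over $A$, it also lies in $A^+$, and therefore in $\overline{A} = A^+ \cap B$ (noting that by construction any element of $B$ integral over $A$ lies in $A^+$, so $\overline{A} = A^+ \cap B$). Setting $\pi_n := p^{1/p^n}$ and $\pi := \pi_1$ gives the required relations $\pi_{n+1}^p = \pi_n$ and $\pi^p = p$.

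Second I would compute the kernel of the Frobenius on $\overline{A}/p\overline{A}$. The inclusion $\pi\overline{A}/p\overline{A} \subseteq \Ker F$ is immediate from $\pi^p = p$. For the reverse, suppose $x \in \overline{A}$ satisfies $x^p = py$ with $y \in \overline{A}$. The element $\pi$ is a unit in $B = \overline{A}[p^{-1}]$ because $\pi^{-1} = \pi^{p-1}/p \in B$, so $x/\pi \in B$; at the same time $(x/\pi)^p = y \in \overline{A}$ shows that $x/\pi$ is integral over $A$, giving $x/\pi \in A^+$. Hence $x/\pi \in A^+ \cap B = \overline{A}$, so $x \in \pi\overline{A}$, and the induced map $\overline{A}/\pi\overline{A} \to \overline{A}/p\overline{A}$ is injective.

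Third, and most substantively, I would prove surjectivity of the Frobenius on $\overline{A}/p\overline{A}$, which is the semiperfectness statement. The guiding Kummer-type principle is that if for a given $a \in \overline{A}$ one can exhibit $u \in \overline{A}$ that is simultaneously a unit in $B$ and satisfies $u \equiv 1+a \pmod{p\overline{A}}$, then the polynomial $Z^p - u$ has unit discriminant $p^p u^{p-1}$ over $B$, so $u^{1/p} \in B$; since $u^{1/p}$ is integral over $A$ it lies in $A^+ \cap B = \overline{A}$, and the element $b := u^{1/p}-1$ then satisfies
\[
b^p = (u^{1/p}-1)^p \equiv u + (-1)^p \equiv u-1 \equiv a \pmod{p\overline{A}},
\]
because every intermediate binomial coefficient $\binom{p}{k}$ for $1 \le k \le p-1$ is divisible by $p$. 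The central task is to produce such a unit $u$ for an arbitrary $a$: one exploits the hypothesis $p \in \operatorname{Jac}(A)$, which makes $1 + p\overline{A}$ a source of units after passing to suitable \'etale extensions inside $B$, together with the freedom to replace $a$ by any element of its coset in $\overline{A}/p\overline{A}$ that differs from a unit by a multiple of $p$, and to absorb the resulting denominators into the tower $\{\pi_n\}$ constructed in the first stage. This final construction is where I expect the main obstacle to lie; Stages 1 and 2 are essentially formal consequences of $\pi^p = p$ and the invertibility of $\pi$ in $B$, whereas Stage 3 requires a careful analysis tailored to the structure of $A$.
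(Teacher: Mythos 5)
Stages 1 and 2 of your plan are sound and essentially agree with the paper: the paper also obtains the $\pi_n$ as roots of $X^{p^n}-p$ (étale after inverting $p$, hence in $B$ by maximality, hence in $\overline{A}=A^+\cap B$), and it computes the kernel of Frobenius by the same division-by-$\pi$ argument (writing $x=\pi b'$ with $b'\in\overline{A}[\pi^{-1}]\cap A^+$ and invoking normality of $\overline{A}$, which it establishes first via ind-étaleness of $A[p^{-1}]\to\overline{A}[p^{-1}]$).

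The genuine gap is Stage 3, surjectivity of Frobenius on $\overline{A}/p\overline{A}$, which you yourself flag as the unresolved obstacle — and your Kummer-type route does not close it. For a non-unit $a$, the element $1+a$ need not be a unit, and the "freedom to replace $a$ by an element of its coset mod $p$" cannot repair this: since $p$ lies in the Jacobson radical of $\overline{A}$ (every maximal ideal of the integral extension $\overline{A}$ lies over one of $A$), $1+a+pc$ is a unit if and only if $1+a$ is. So the reduction of an arbitrary class to the unit case is exactly what is missing. The paper's key idea is an Artin--Schreier-type trick that works for \emph{arbitrary} $b\in\overline{A}$ with no unit hypothesis: take $f(X)=X^{p^2}-pX-b$. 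Its derivative is $p(pX^{p^2-1}-1)$, and $pX^{p^2-1}-1$ is a unit in the module-finite $\overline{A}$-algebra $\overline{A}[X]/(f)$ because $p$ lies in its Jacobson radical; hence $\overline{A}[p^{-1}]\to(\overline{A}[X]/(f))[p^{-1}]$ is finite étale. A root $a\in A^+$ then generates an extension $\overline{A}[a]$ that is ind-étale over $A[p^{-1}]$ after inverting $p$ (using normality of $\overline{A}$ to split off $\overline{A}[a][p^{-1}]$ as a factor), so $a\in\overline{A}$ by maximality of $B$, and $(a^p)^p=a^{p^2}=pa+b\equiv b\pmod{p\overline{A}}$. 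The extra term $-pX$ is precisely what makes the derivative a unit after inverting $p$ while vanishing modulo $p$; this is the device your proposal lacks.
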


\begin{proof}
This is found in \cite[Lemma 10.1]{Sh17} and we give its proof here, because it is concrete and contains useful ideas. 

$A$ is a normal domain and $A[p^{-1}] \to \overline{A}[p^{-1}]$
is ind-\'etale by assumption, so that $\overline{A}[p^{-1}]$ is normal, which also implies that $\overline{A}$ is normal. Note that $pA \ne A$ implies that $p$ is not a unit element of $A$. Under the stated assumption that $p$ is contained in any maximal ideal of $A$, it follows that $p$ is contained in the Jacobson radical of $\overline{A}$, because every maximal ideal of $\overline{A}$ lies over some maximal ideal of $A$. Pick an element $b \in \overline{A}$ and consider a polynomial
$$
f(X):=X^{p^2}-pX-b \in \overline{A}[X].
$$
Then $f'(X)=p^2X^{p^2-1}-p=p(pX^{p^2-1}-1)$ and the localization map:
$$
\overline{A}[p^{-1}] \to \overline{A}[X]/(f(X))[p^{-1}]
$$
is finite \'etale, because $p$ is contained in the Jacobson radical of $\overline{A}[X]/(f(X))$ and therefore, the image of $pX^{p^2-1}-1$ in $\overline{A}[X]/(f(X))$ is a unit element. There exist an element $a \in A^+$ such that $f(a)=0$ and a commutative diagram:
$$
\begin{CD}
\overline{A} @>>>  \overline{A}[X]/(f(X)) \\
@| @VVV \\
\overline{A} @>>>  \overline{A}[a] @>>> A^+ \\
\end{CD}
$$
where $\overline{A}[X]/(f(X)) \to \overline{A}[a]$ is defined by mapping $X$ to $a$. Then $\overline{A}[X]/(f(X))][p^{-1}]$ is isomorphic to a finite product of normal domains in view of the normality of $\overline{A}$, and $\overline{A}[a][p^{-1}]$ is isomorphic to one of the factors of $\overline{A}[X]/(f(X))][p^{-1}]$. This shows that 
$$
A[p^{-1}] \to  \overline{A}[a][p^{-1}]
$$
is ind-\'etale. As $A[p^{-1}] \to \overline{A}[p^{-1}]$ is the maximal \'etale extension in $A^+[p^{-1}]$, it follows that $a \in \overline{A}$. Finally, we have 
$$
a^{p^2}-b \equiv a^{p^2}-pa-b \equiv 0 \pmod{p \overline{A}}
$$ 
and $(a^p)^p \equiv b \pmod{p \overline{A}}$. This proves that the Frobenius map is surjective on $\overline{A}/p \overline{A}$. 

Let $\pi_n \in A^+$ be a root of the equation $X^{p^n}-p=0$ and let $A[\pi_n]$ be the subring of $A^+$. Then $\overline{A} \to \overline{A}[\pi_n]$ is \'etale after inverting $p$, so we have $\pi_n \in \overline{A}$. Put $\pi:=\pi_1$. To deduce an isomorphism $\overline{A}/\pi \overline{A}  \cong \overline{A}/p \overline{A}$, it suffices to show that the kernel of the Frobenius map:
$$
F: \overline{A}/p \overline{A} \to \overline{A}/p \overline{A}
$$
is principally generated by $\pi$. Assume that $\overline{x}^p=0$ for $\overline{x} \in \overline{A}/(p)$ with its lift $x \in \overline{A}$. Then we can write $x^p=p \cdot b$ for some $b \in \overline{A}$, which implies that $x=\pi \cdot b'$ with $b' \in A^+$ and
$$
b' \in \overline{A}[{\pi}^{-1}] \cap A^+.
$$
Since $\overline{A}$ is integrally closed in the field of fractions, we have $b' \in \overline{A}$ and $x \in \pi \overline{A}$.
\end{proof}

The regular local ring constructed in the following lemma is called a \textit{small Fontaine ring} in \cite{Sh11}. We need it for a ring extension $R \subset T$, where $T$ is possibly smaller than its absolute integral closure.

\begin{lemma}
\label{SmallFontaine}
Let $(R,\fm)$ be a complete Noetherian local domain of dimension $d$ and mixed characteristic $p>0$ and let $p,x_2,\ldots,x_d$ be a system of parameters. Let us choose compatible systems of elements:
$$
\{p^{\frac{1}{p^n}}\}_{n  \ge 0},\{x_2^{\frac{1}{p^n}}\}_{n  \ge 0},\ldots,\{x_d^{\frac{1}{p^n}}\}_{n  \ge 0}
$$
in $R^+$ and assume that $T$ is an integral pre-perfectoid domain such that $R \subset T \subset R^+$ and $\{p^{\frac{1}{p^n}}\}_{n  \ge 0},\{x_2^{\frac{1}{p^n}}\}_{n  \ge 0},\ldots,\{x_d^{\frac{1}{p^n}}\}_{n  \ge 0}$ are contained in $T$. Then there exist systems of elements $p^\flat, x_2^\flat,\ldots,x_d^\flat \in T^\flat$ associated to $p,x_2,\ldots,x_d$, together with a ring homomorphism:
$$
\psi:\mathbb{F}_p[[p^\flat,x_2^\flat,\ldots,x_d^\flat]] \to T^\flat
$$
such that $\mathbb{F}_p[[p^\flat,x_2^\flat,\ldots,x_d^\flat]]$ is a complete regular Noetherian local ring of dimension $d$ and characteristic $p>0$. Moreover, we have the equalities: $\psi(p^\flat)=p^\flat,\psi(x_2^\flat)=x_2^\flat,\ldots,\psi(x_d^\flat)=x_d^\flat$.
\end{lemma}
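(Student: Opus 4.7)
The plan is to first exhibit the elements $p^\flat, x_2^\flat, \ldots, x_d^\flat$ inside $T^\flat$, and then construct $\psi$ by factoring through a canonical pre-perfectoid subalgebra. Since by hypothesis the compatible systems $\{p^{1/p^n}\}_{n\ge 0}$ and $\{x_i^{1/p^n}\}_{n\ge 0}$ for $i=2,\ldots,d$ all lie in $T$, their reductions modulo $p$ satisfy the Frobenius compatibility $(\overline{p^{1/p^{n+1}}})^p=\overline{p^{1/p^n}}$ in $T/pT$, and likewise for the $x_i$. By the very definition $T^\flat=\varprojlim_{F} T/pT$, these sequences assemble into the desired elements $p^\flat:=(\overline{p},\overline{p^{1/p}},\overline{p^{1/p^2}},\ldots)\in T^\flat$ and $x_i^\flat:=(\overline{x_i},\overline{x_i^{1/p}},\ldots)\in T^\flat$ associated to $p,x_2,\ldots,x_d$.

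To build $\psi$, I first note that sending $y_1\mapsto p^\flat$ and $y_i\mapsto x_i^\flat$ for $i\ge 2$ defines an $\mathbb{F}_p$-algebra homomorphism $\mathbb{F}_p[y_1,\ldots,y_d]\to T^\flat$ on the polynomial ring, since $T^\flat$ is a perfect $\mathbb{F}_p$-algebra by Proposition \ref{FontaineRing}. Extending this to the $(y_1,\ldots,y_d)$-adic completion amounts to showing that partial sums of arbitrary power series in the generators converge in $T^\flat$. For this I would invoke Cohen's structure theorem to pick a coefficient ring $V\subset R$ that is a complete DVR with uniformizer $p$ and residue field equal to that of $R$, so that $R_0:=V[[x_2,\ldots,x_d]]\hookrightarrow R$ is a module-finite extension. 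Letting $T_0\subset T$ denote the $R_0$-subalgebra generated by the chosen $p$-power roots of $p,x_2,\ldots,x_d$, the $p$-adic completion $T_0^{\wedge}$ is visibly an integral perfectoid subring of $T^{\wedge}$, and applying the Fontaine ring functor to $T_0\hookrightarrow T$ yields a homomorphism $T_0^\flat\to T^\flat$. A direct inspection of $T_0^\flat$, using that $R_0/pR_0=k[[x_2,\ldots,x_d]]$ is already $(\overline{x_2},\ldots,\overline{x_d})$-adically complete and that this completeness propagates through the inverse limit defining $T_0^\flat$, shows that $T_0^\flat$ contains a canonical copy of $\mathbb{F}_p[[p^\flat,x_2^\flat,\ldots,x_d^\flat]]$. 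The map $\psi$ is then the composite $\mathbb{F}_p[[y_1,\ldots,y_d]]\to T_0^\flat\to T^\flat$.

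The classical fact that $\mathbb{F}_p[[y_1,\ldots,y_d]]$ is a complete regular Noetherian local ring of dimension $d$ and characteristic $p$ is immediate, and the equations $\psi(p^\flat)=p^\flat$ and $\psi(x_i^\flat)=x_i^\flat$ are built into the construction. I expect the main technical hurdle to be the convergence step, that is, justifying that arbitrary formal power series in $p^\flat$ and the $x_i^\flat$ converge in $T^\flat$; working inside the pre-perfectoid subring $T_0$ over the Cohen subring $R_0$ makes this tractable because the tilt $T_0^\flat$ admits a transparent concrete description in terms of $k[[x_2,\ldots,x_d]]$ adjoined with compatible $p$-power root systems.
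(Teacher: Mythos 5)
Your overall architecture matches the paper's: extract $p^\flat,x_2^\flat,\ldots,x_d^\flat$ from the given compatible systems, cut down to a small subring built from a Cohen coefficient ring with the $p$-power roots adjoined, identify the power series ring inside (the tilt of) that small piece, and compose with the functorial map to $T^\flat$. But there is a concrete problem with your choice of coefficient ring. You take $R_0=V[[x_2,\ldots,x_d]]$ with $V$ a complete unramified DVR whose residue field $k$ is that of $R$, and $k$ need not be perfect (the lemma makes no such hypothesis). When $k$ is imperfect, the Frobenius is not surjective on $T_0/pT_0$ (it cannot hit $k\setminus k^p$ inside $R_0/pR_0=k[[x_2,\ldots,x_d]]$), so $T_0^{\wedge}$ is \emph{not} ``visibly an integral perfectoid subring,'' and the promised ``transparent concrete description'' of $T_0^\flat$ in terms of $k[[x_2,\ldots,x_d]]$ fails: the tilt only retains $\bigcap_n k^{p^n}$. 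The paper sidesteps this by descending one step further, to $A:=W(\mathbb{F}_p)[[x_2,\ldots,x_d]]=\mathbb{Z}_p[[x_2,\ldots,x_d]]$ (a subring of $V[[x_2,\ldots,x_d]]$ that is no longer module-finite in $R$, but that does not matter), so that each $A_n:=A[p^{1/p^n}},x_2^{1/p^n},\ldots,x_d^{1/p^n}]$ has perfect residue field and Frobenius carries $A_{n+1}/pA_{n+1}$ onto $A_n/pA_n$. Since the target is only $\mathbb{F}_p[[p^\flat,\ldots,x_d^\flat]]$, not $k[[\cdots]]$, your argument is repaired by exactly this substitution.

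The step you yourself flag as the main hurdle --- convergence of arbitrary power series in $p^\flat,x_i^\flat$ --- is the actual mathematical content, and your proposal only asserts it (``a direct inspection \ldots shows''; ``completeness propagates through the inverse limit''). Note that it is not enough to invoke the inverse-limit topology on $T_0^\flat=\varprojlim_F T_0/pT_0$ with discrete terms, in which such series do not converge. The paper's route, via \cite[Proposition 4.5]{Sh11}, is to compute the inverse limit of the system $\{A_n/pA_n;F_n\}$ of \emph{complete local rings} with surjective Frobenius transition maps and to identify it outright with the complete regular local ring $\mathbb{F}_p[[p^\flat,x_2^\flat,\ldots,x_d^\flat]]$ of dimension $d$; the map to $T^\flat$ then comes from taking inverse limits in the evident commutative diagram. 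Your sketch is pointing in that direction, but as written the identification of the power series ring inside $T_0^\flat$ is a gap, and with your choice of $V$ the intermediate claims you would use to fill it are false.
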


\begin{proof}
Choose a module-finite extension $V[[x_2,\ldots,x_d]] \hookrightarrow R$, where $V$ is an unramified complete discrete valuation ring. Moreover, we have
$$
W(\mathbb{F}_p)[[x_2,\ldots,x_d]] \hookrightarrow V[[x_2,\ldots,x_d]] 
\hookrightarrow R,
$$
where the first map is not necessarily module-finite. Now we want to apply the construction of Fontaine rings to the map $A:=W(\mathbb{F}_p)[[x_2,\ldots,x_d]] \hookrightarrow T$. There is a tower of of module-finite extensions of regular local contained in $T$:
$$
A=A_0 \hookrightarrow A_1 \hookrightarrow \cdots \hookrightarrow A_n 
\hookrightarrow \cdots
$$
such that $A_n:=A[p^{\frac{1}{p^n}},x_2^{\frac{1}{p^n}},\ldots,x_d^{\frac{1}{p^n}}]$. Since the residue field of $A_n$ is obviously perfect for $n \ge 0$, the Frobenius endomorphism $A_{n+1}/pA_{n+1} \to A_{n+1}/pA_{n+1}$ surjects onto the subring $A_n/pA_n$ of $A_{n+1}/pA_{n+1}$. We have a commutative diagram:
$$
\begin{CD}
A_{n+1}/pA_{n+1} @>>> T/pT \\
@VF_{n+1} VV @VF VV \\
A_n/pA_n @>>> T/pT \\
\end{CD}
$$
where the horizontal maps are induced by an inclusion $A_n \hookrightarrow T$, and the vertical maps $F_{n+1}$ are induced by the Frobenius on $A_{n+1}/pA_{n+1}$. By \cite[Proposition 4.5]{Sh11}, we find that the limit of the inverse system $\{A_n/pA_n;F_n \}_{n \ge 0}$ is isomorphic to the regular local ring $\mathbb{F}_p[[p^\flat,x_2^\flat,\ldots,x_d^\flat]]$, where the sequence $p^\flat,x_2^\flat,\ldots,x_d^\flat$ is determined by $$
\{p^{\frac{1}{p^n}}\}_{n  \ge 0},\{x_2^{\frac{1}{p^n}}\}_{n  \ge 0},\ldots,\{x_d^{\frac{1}{p^n}}\}_{n  \ge 0}.
$$
Taking the inverse limits to the above commutative diagram, we get a ring homomorphism:
$$
\mathbb{F}_p[[p^\flat,x_2^\flat,\ldots,x_d^\flat]] \to T^\flat,
$$
as desired.
\end{proof}

The following definition is taken from \cite{GR03} under some restrictions.

\begin{definition}
Let $T$ be a $p$-torsion free algebra and let $\pi \in T$ be a nonzero divisor and assume that $T$ admits a compatible system of elements $\{\pi_n\}_{n \ge 0}$ such that $\pi:=\pi_0$. Let us write $\pi:=\pi^{\frac{1}{p^n}} \in T$ for clarity. Then a $T$-module $M$ is said to be \textit{almost zero with respect to $\pi^{\frac{1}{p^{\infty}}}$}, if $\pi^{\frac{1}{p^n}} \cdot M=0$ for all $n>0$. A homomorphism of $T$-modules $f:M \to N$ is said to be \textit{almost isomorphic}, if $\ker(f)$ and $\coker(f)$ are almost zero with respect to $\pi^{\frac{1}{p^{\infty}}}$. In this case, we denote by $M \approx N$. 
\end{definition}

We shall say that the pair $(T,\pi^{\frac{1}{p^{\infty}}}T)$ is a \textit{basic setup} and keep in mind that various notions and definitions appearing in almost ring theory is dependent on a choice of such a basic setup. Let us put $I:=\pi^{\frac{1}{p^{\infty}}}T$. Then it is obvious that $I^2=I$ and $I$ is a flat $T$-module, so that results and techniques of \cite{GR03} and \cite{GR} can be applied. We refer the reader to \cite[Definition 4.1.1]{An2} and \cite{R10} for the definition of almost Cohen-Macaulay algebras.

\begin{definition}
\label{DefAlmost}
Let $(R,\fm)$ be a Noetherian local ring of dimension $d>0$ and mixed characteristic $p>0$, and let $(T,\pi^{\frac{1}{p^{\infty}}}T)$ be a basic setup equipped with an $R$-algebra structure. Then $T$ is called \textit{almost Cohen-Macaulay} (or \textit{$(\pi^{\frac{1}{p^\infty}})$-almost Cohen-Macaulay}), if $T/\fm T$ is not almost zero with respect to $\pi^{\frac{1}{p^{\infty}}}$, and there exists a system of parameters $x_1,\ldots,x_d$ of $R$ such that
$$
\pi^{\frac{1}{p^n}} \cdot \frac{\big((x_1,\ldots,x_i):_T x_{i+1}\big)}{(x_1,\ldots,x_i)}=0
$$
for all $n>0$ and $i=0,\ldots,d-1$.
\end{definition}

\begin{lemma}
\label{notalmostzero}
Let $(R,\fm)$ be a Noetherian local ring and let $(B,\pi^{\frac{1}{p^{\infty}}}B)$ be a basic setup over $R$, where $\pi$ is part of a system of parameters of $R$. Assume that $\{B_{\lambda}\}_{\lambda \in \Lambda}$ is a filtered direct system of $B$-algebras that are seeds over $R$. Let $B_{\infty}$ be its direct limit. If $x_1,\ldots,x_d$ is a system of parameters of $R$, then 
$$
\frac{B_{\infty}}{(x_1,\ldots,x_d)B_{\infty}}
$$
is not an almost zero $B$-module.
\end{lemma}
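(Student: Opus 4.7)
I would proceed by contradiction. Assume that $B_\infty/(x_1,\ldots,x_d)B_\infty$ is almost zero with respect to $\pi^{1/p^{\infty}}$; unpacking the definition, this means $\pi^{1/p^n} \in (x_1,\ldots,x_d)B_\infty$ for every $n>0$. Writing such a relation $\pi^{1/p^n}=\sum_{i=1}^d x_i b_i$ in $B_\infty$ and raising both sides to the $p^n$-th power, every monomial of the resulting multinomial expansion lies in $(x_1,\ldots,x_d)^{p^n}$, so that
\[
\pi \;\in\; (x_1,\ldots,x_d)^{p^n}\, B_\infty \qquad \text{for every } n>0.
\]

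The plan is to transport this infinite divisibility of $\pi$ into a single $\fm$-adically separated balanced big Cohen-Macaulay algebra $D$ and contradict the fact that $\pi$ must be a nonzerodivisor there. In the characteristic $p>0$ setting in which this lemma will be applied, the class of seeds over $R$ is closed under filtered direct limits (a standard feature of Dietz's theory from which Theorem \ref{seed} is drawn), so $B_\infty = \varinjlim_\lambda B_\lambda$ is itself a seed over $R$. Applying part (2) of Theorem \ref{seed} then supplies a ring homomorphism $B_\infty \to D$ where $D$ is an $\fm$-adically separated, quasilocal, balanced big Cohen-Macaulay $R$-algebra; the $\fm$-adic separatedness gives $\bigcap_{m \ge 0}\fm^m D = 0$.

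Pushing the displayed containment along $B_\infty \to D$ and using $(x_1,\ldots,x_d)\subseteq \fm$, one obtains
\[
\pi \;\in\; \bigcap_{n>0}(x_1,\ldots,x_d)^{p^n}\, D \;\subseteq\; \bigcap_{m \ge 0}\fm^m D \;=\; 0,
\]
so $\pi=0$ in $D$. On the other hand, $\pi$ extends to a system of parameters $\pi,z_2,\ldots,z_d$ of $R$, and by the balanced big Cohen-Macaulay property such a sequence is regular on $D$; hence $\pi$ is a nonzerodivisor on $D$, and in particular $\pi\ne 0$ there. The resulting contradiction proves the lemma.

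The main obstacle in executing this plan is the step asserting that $B_\infty$ itself is a seed. In characteristic $p>0$ this is the closure of seeds under filtered direct limits: one chooses the big Cohen-Macaulay targets $C_\lambda$ of the $B_\lambda$ compatibly along the filtered system (the fact that algebra modifications behave well under Frobenius is what allows this in a uniform way), and the regular-sequence property then transfers to the colimit by the exactness of filtered colimits. Once $B_\infty$ is known to be a seed, the $\fm$-adic separatedness furnished by Theorem \ref{seed} together with the regular sequence $\pi,z_2,\ldots,z_d$ does all of the remaining work, and the contradiction is immediate.
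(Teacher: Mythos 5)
Your argument is the same as the paper's: assume almost-zeroness, push the relations $\pi^{1/p^n}\in(x_1,\ldots,x_d)B_\infty$ into an $\fm$-adically separated big Cohen--Macaulay algebra, deduce $\pi\in\bigcap_n(x_1,\ldots,x_d)^{p^n}=0$ there, and contradict the fact that $\pi$, being part of a system of parameters, must be a nonzerodivisor. The step that $B_\infty$ is a seed is exactly the paper's first move, cited to \cite[Lemma 3.2]{Di07}.

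One caveat: you obtain the separated target by invoking Theorem \ref{seed}(2), justifying this with the remark that the lemma ``will be applied in characteristic $p>0$.'' That premise is wrong --- in the proof of Theorem \ref{seedFontaine} the lemma is applied to $T_\infty$ over a \emph{mixed-characteristic} ring $R$ with system of parameters $p,x_2,\ldots,x_d$, and Theorem \ref{seed} is only available over equal-characteristic-$p$ local rings. The paper avoids this by taking a big Cohen--Macaulay algebra $T$ receiving $B_\infty$ and replacing it with its $\fm$-adic completion via Proposition \ref{Strooker}, which is characteristic-free and already gives both separatedness and the balanced property (hence $\pi$ a nonzerodivisor). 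Your proof is repaired by making that substitution; everything else goes through verbatim.
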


\begin{proof}
By \cite[Lemma 3.2]{Di07}, it follows that $B_{\infty}$ is a seed over $R$ by assumption. Then there is a map $B_{\infty} \to T$ such that $T$ is a big Cohen-Macaulay $R$-algebra. By Proposition \ref{Strooker}, we may assume that $T$ is an $\fm$-adically complete and separated big Cohen-Macaulay algebra. In particular, since $\pi$ is part of a system of parameters of $R$, it follows that $\pi$ is a nonzero divisor in $T$. Assume to the contrary that $\pi^{\frac{1}{p^{\infty}}}B_{\infty} \subset (x_1,\ldots,x_d)B_{\infty}$. That is, we get
$$
\pi^{\frac{1}{p^n}}=\sum_{i=1}^d a^{(n)}_i x_i~\mbox{with}~a^{(n)}_i \in B_{\infty}~\mbox{for every}~n>0.
$$
By mapping this relation to $T$, we get $\pi^{\frac{1}{p^n}} \in (x_1,\ldots,x_d)T$. In other words,
$$
\pi \in \bigcap_{n>0} (x_1,\ldots,x_d)^{p^n}T=(0),
$$
because $T$ is $\fm$-adically separated. However, this is a contradiction.
\end{proof}

Let $(R,\fm)$ be a complete Noetherian local normal domain of mixed characteristic with perfect residue field $k$ and choose a module-finite extension: $A:=W(k)[[x_2,\ldots,x_d]] \hookrightarrow R$ and fix compatible systems of elements in $R^+$:
$$
\{p^{\frac{1}{p^n}}\}_{n  \ge 0},\{x_2^{\frac{1}{p^n}}\}_{n  \ge 0},\ldots,\{x_d^{\frac{1}{p^n}}\}_{n  \ge 0}.
$$

\begin{definition}
\label{Faltings}
Let the notation be as above. We set
$$
A_{\infty}:=\bigcup_{n \ge 0} A[p^{\frac{1}{p^n}},x_2^{\frac{1}{p^n}},\ldots,x_d^{\frac{1}{p^n}}].
$$
and define $R_{\infty}$ to be the normalization of the join $R[A_{\infty}]$ in its field of fractions.
\end{definition}

The ring $A_{\infty}$ has been defined previously in \cite{Sh07} and \cite{Sh16} (in the latter paper, the ring $A_{\infty}$ was used to prove the existence of a big Cohen-Macaulay algebra under a certain condition, using Faltings' Almost Purity Theorem). It is an integral pre-perfectoid, henselian quasi-local normal domain.

\begin{lemma}
\label{Fontaine}
Let $A_{\infty}$ and $R_{\infty}$ be as in Definition \ref{Faltings}. Then $R_{\infty}$ is a henselian quasi-local normal domain and the ring extension $R \to R_{\infty}$ is ind-\'etale after an inversion of $px_2\cdots x_d$.
\end{lemma}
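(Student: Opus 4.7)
The plan is to work with the finite-level tower $R_n := \widetilde{R[A_n]}$, where $A_n := A[p^{\frac{1}{p^n}}, x_2^{\frac{1}{p^n}}, \ldots, x_d^{\frac{1}{p^n}}]$ and the tilde denotes normalization inside $\Frac(R[A_n])$. First, $R[A_\infty]$ is a subring of the domain $R^+$, hence itself a domain, so $R_\infty$ is a normal domain by construction. The filtered union $R[A_\infty] = \bigcup_n R[A_n]$ yields $R_\infty = \bigcup_n R_n$ by the standard argument: any element of $R_\infty$ satisfies a monic polynomial relation with coefficients in some $R[A_n]$ and lies in $\Frac(R[A_n])$ for sufficiently large $n$, hence already lies in $R_n$.

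For the henselian quasi-local property, I would exploit the fact that $A = W(k)[[x_2,\ldots,x_d]]$ is a complete regular local ring, hence henselian and excellent. Each $A_n$ is module-finite over $A$ and a domain (living inside $R^+$), so it is itself a henselian excellent local domain. Since $A_n$ is Nagata, the normalization $R_n$ of the module-finite extension $R[A_n]$ of $A_n$ is module-finite over $A_n$, and being a domain integral over the henselian local ring $A$, $R_n$ is a henselian local domain. The transition maps $R_n \hookrightarrow R_{n+1}$ are local (both maximal ideals contract to $\fm_A$), so the filtered colimit $R_\infty = \bigcup_n R_n$ is henselian quasi-local.

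For the ind-\'etale conclusion after inverting $\pi := p x_2 \cdots x_d$, I would check that $A[\pi^{-1}] \to A_n[\pi^{-1}]$ is finite \'etale via the presentation
$$
A_n \cong A[Y_0, Y_2, \ldots, Y_d]/(Y_0^{p^n} - p,\, Y_2^{p^n} - x_2,\, \ldots,\, Y_d^{p^n} - x_d);
$$
the Jacobian is diagonal with entries $p^n Y_0^{p^n-1}$ and $p^n Y_i^{p^n-1}$, each invertible after inverting $\pi$. Base-changing to $R$, the ring $R \otimes_A A_n[\pi^{-1}]$ is finite \'etale over the normal ring $R[\pi^{-1}]$ and thus a finite product of normal domains. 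The multiplication surjection onto $R[A_n][\pi^{-1}] \subset R^+[\pi^{-1}]$ has prime kernel and therefore picks out one of these factors, exhibiting $R[A_n][\pi^{-1}]$ as a normal finite \'etale $R[\pi^{-1}]$-algebra. Consequently $R_n[\pi^{-1}] = R[A_n][\pi^{-1}]$ is finite \'etale over $R[\pi^{-1}]$, and passing to the colimit yields the desired ind-\'etaleness of $R[\pi^{-1}] \to R_\infty[\pi^{-1}]$.

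The main technical obstacle I anticipate is the mismatch between the abstract tensor product $R \otimes_A A_n$ and the subring $R[A_n] \subset R^+$: these need not coincide as rings (the tensor product can acquire extra components), but after inverting $\pi$ the \'etaleness forces a splitting into a finite product of normal domains, of which $R[A_n][\pi^{-1}]$ is a direct factor. A minor secondary point is the permanence of henselianness under filtered colimits along local injections, which follows because any witness to Hensel's lemma in the colimit is already visible at some finite stage.
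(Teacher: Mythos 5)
Your argument is correct and follows essentially the same route as the paper's proof: reduce to the finite-level tower, verify finite \'etaleness after inverting $px_2\cdots x_d$ (the paper does this via minimal polynomials over the normal domain $R$, you via the Jacobian of the explicit presentation of $A_n$ over $A$ followed by base change and factor-picking, which is the same device the paper uses in Lemma \ref{semiperfect}), and pass to the filtered colimit. The henselian quasi-local claim is handled in the same spirit --- the paper invokes integrality of $R_{\infty}$ over the henselian quasi-local ring $A_{\infty}$, while you build it up from the finite-level henselian local rings --- and the technical points you flag (the tensor product versus the join, and permanence of henselianness under filtered colimits along local maps) are resolved exactly as you indicate.
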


\begin{proof}
Since $R_{\infty}$ is an integral extension domain of $A_{\infty}$, it is a henselian quasi-local normal domain. Next, we set $R_n:=R[p^{\frac{1}{p^n}},x_2^{\frac{1}{p^n}},\ldots,x_d^{\frac{1}{p^n}}]$. It is easily checked that
$$
R[\frac{1}{px_2\cdots x_d}] \to R_n[\frac{1}{px_2\cdots x_d}]
$$
is finite \'etale.\footnote{One uses the following fact: Let $R[a]$ be a simple ring extension of $R$ contained in $R^+$. Then there is a unique monic polynomial $f \in R[x]$ such that $R[x]/(f) \cong R[a]$ by the normality of $R$.} Taking the direct limit, $R[\frac{1}{px_2\cdots x_d}] \to \varinjlim_n R_n[\frac{1}{px_2\cdots x_d}]$ is ind-\'etale and thus, $\varinjlim_n R_n[\frac{1}{px_2\cdots x_d}]$ is a normal domain. On the other hand, $R_{\infty}[\frac{1}{px_2\cdots x_d}]$ is a normal domain that is integral over $R[\frac{1}{px_2\cdots x_d}]$ and it has the same field of fractions as that of $\varinjlim_n R_n[\frac{1}{px_2\cdots x_d}]$. So we have
$$
\varinjlim_n R_n[\frac{1}{px_2\cdots x_d}]=R_{\infty}[\frac{1}{px_2\cdots x_d}],
$$
which proves the lemma.
\end{proof}

We should notice that $R_{\infty}$ is \textit{not} integral pre-perfectoid in general. The following remarkable result was established by Y. Andr\'e as a consequence of using his Perfectoid Abhyankar's Lemma. We state it in the form we need.

\begin{theorem}[Y. Andr\'e]
\label{almostCM}
Let $(R,\fm)$ be a complete Noetherian local normal domain of mixed characteristic $p>0$ with perfect residue field $k$ and let $\{R_{\lambda}\}_{\lambda \in \Lambda}$ be a filtered direct system of local normal domains that are injective, module-finite over $R$. Choose a module-finite extension
$$
A:=W(k)[[x_2,\ldots,x_d]] \hookrightarrow R.
$$
Then there exists an element $g \in A \setminus pA$ such that
$$
A[\frac{1}{pg}] \to R[\frac{1}{pg}]
$$
is \'etale. Assume that $A[\frac{1}{pg}] \to R_{\lambda}[\frac{1}{pg}]$ is \'etale for every $\lambda \in \Lambda$. Then there exists a family of integral almost perfectoid, almost Cohen-Macaulay $R$-algebras $\{T_{\lambda}\}_{\lambda \in \Lambda}$ with respect to $\pi^{\frac{1}{p^\infty}}$ such that $\pi:=pg$ and that fills in the following commutative diagram of $R$-algebras:
$$
\begin{CD}
T_{\lambda_1} @>>> T_{\lambda_2} \\
@AAA @AAA \\
R_{\lambda_1} @>>> R_{\lambda_2} \\
\end{CD}
$$
for every $\lambda_1 \le \lambda_2$ in $\Lambda$, where $T_{\lambda_1} \to T_{\lambda_2}$ is integral.
\end{theorem}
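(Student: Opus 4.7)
The plan is to use the pre-perfectoid algebra $A_\infty$ from Definition \ref{Faltings} as a perfectoid base and to apply Andr\'e's Perfectoid Abhyankar Lemma from \cite{An1} to each finite \'etale extension $R_{\lambda}[\frac{1}{pg}]$ of $A[\frac{1}{pg}]$, in a manner sufficiently functorial to yield the required compatible system $\{T_{\lambda}\}$.

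\medskip

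First I would set up the perfectoid base. Enlarging $A_\infty$ if necessary inside $A^+$ via the construction of Lemma \ref{semiperfect}, one ensures that $A_\infty$ contains a compatible system $\pi^{1/p^{\infty}}$ of $p$-power roots of $\pi:=pg$, so that $(A_\infty^\wedge,\pi^{1/p^{\infty}}A_\infty^\wedge)$ is a basic setup with $A_\infty^\wedge$ integral perfectoid. The existence of $g \in A\setminus pA$ with $A[\frac{1}{pg}]\to R[\frac{1}{pg}]$ \'etale is a consequence of generic smoothness: since $A\hookrightarrow R$ is module-finite between $d$-dimensional normal domains and is separable after inverting $p$, the discriminant is a principal divisor cut out by some $g$. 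After shrinking $g$, the same conclusion holds for each $R_\lambda$, as assumed in the statement.

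\medskip

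Next, for each $\lambda$ I would form $R_\lambda \otimes_A A_\infty$ and normalize. Inverting $\pi$ gives a finite \'etale extension of $A_\infty[\frac{1}{\pi}]$, whose ramification locus is confined to the divisor of $\pi$. This is precisely the setting of Andr\'e's Perfectoid Abhyankar Lemma \cite{An1}: tilting to characteristic $p$ via Proposition \ref{Teichmuller}, one uses the Frobenius to split the ramification along the divisor $g$, and untilts to obtain an integral perfectoid $A_\infty^\wedge$-algebra $T_\lambda$ that is almost finite \'etale over $A_\infty^\wedge$ with respect to $\pi^{1/p^{\infty}}$, equipped with a map from the $p$-adic completion of the normalization of $R_\lambda \otimes_A A_\infty$. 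Since $p,x_2,\ldots,x_d$ is a regular sequence on each step $A[p^{1/p^n},x_2^{1/p^n},\ldots,x_d^{1/p^n}]$, which is a regular local ring, it remains a regular sequence on $A_\infty^\wedge$. Almost finite \'etale morphisms of integral perfectoid algebras preserve almost regularity of parameters (an almost flat descent argument), so $T_\lambda$ is $\pi^{1/p^\infty}$-almost Cohen-Macaulay over $R$. The non-triviality $T_\lambda/\fm T_\lambda \not\approx 0$ is ensured by the almost faithful flatness of $A_\infty^\wedge \to T_\lambda$ together with the analogous non-vanishing for $A_\infty^\wedge/\fm A_\infty^\wedge$.

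\medskip

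Finally, for $\lambda_1 \le \lambda_2$ the functoriality of the construction (applied to the common base $A_\infty^\wedge$ and the same ramification divisor $\pi$) produces a canonical homomorphism $T_{\lambda_1}\to T_{\lambda_2}$; replacing each $T_\lambda$ by its integral closure inside a common ambient algebra (for instance in $(\varinjlim_\lambda T_\lambda)^+$) arranges for these transition maps to be integral. The main obstacle is the invocation of the Perfectoid Abhyankar Lemma itself, which rests on Scholze's tilting equivalence and a delicate analysis of almost \'etaleness over perfectoid bases; a secondary difficulty is making the construction uniformly functorial in $\lambda$ rather than only pointwise, which requires fixing a single basic setup $(A_\infty^\wedge,\pi^{1/p^{\infty}}A_\infty^\wedge)$ across the whole direct system and tracking the almost structures compatibly.
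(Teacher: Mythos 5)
Your outline is, in substance, an unpacking of Andr\'e's own construction --- but the paper does not reprove that construction. Its proof of Theorem \ref{almostCM} consists of locating $g$ via generic \'etaleness of $A \to R$, quoting \cite[(18) in \S~4.2]{An2} for the existence of each $T_\lambda$ (explicitly, $T_\lambda := \mathcal{B}^\circ$ from \cite[\S~3.2]{An2}), and then reading off the commutativity of the squares and the integrality of $T_{\lambda_1} \to T_{\lambda_2}$ from the intrinsic form of that cited construction. Measured against this, your sketch has two concrete gaps. First, the opening step ``enlarging $A_\infty$ via Lemma \ref{semiperfect} one ensures that $A_\infty$ contains $\pi^{1/p^\infty}$'' misplaces the main difficulty: Lemma \ref{semiperfect} produces $p$-power roots of $p$ and a semiperfect reduction, but it does not produce a perfectoid (or almost perfectoid) base containing a compatible system $g^{1/p^\infty}$. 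Constructing such a base, and then proving that the finite \'etale extension $R_\lambda[\frac{1}{pg}]$ of its generic fibre extends to an almost finite \'etale integral extension, is precisely the content of the main theorems of \cite{An1}; it cannot be absorbed into the phrase ``this is precisely the setting of the Perfectoid Abhyankar Lemma.''

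Second, your repair of the integrality of the transition maps --- replace each $T_\lambda$ by its integral closure inside $(\varinjlim_\lambda T_\lambda)^+$ --- is circular, since the ambient object is assembled from the very maps you are trying to correct; and even granting a common ambient algebra, an after-the-fact enlargement of $T_\lambda$ would force you to re-verify that the modified algebra is still integral almost perfectoid and almost Cohen--Macaulay, which is not automatic. The paper avoids both issues by taking $T_\lambda = \mathcal{B}^\circ$ verbatim, for which functoriality in $\lambda$ and integrality of the transition maps are visible from the construction in \cite[\S~3.2]{An2}. The remaining ingredients of your sketch (almost Cohen--Macaulayness via almost faithful flatness over the perfectoid base, non-vanishing of $T_\lambda/\fm T_\lambda$) do reflect how \cite{An2} argues, so the defect is in the two points above and in treating as routine the steps that constitute Andr\'e's actual theorems, rather than in the overall strategy.
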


\begin{proof}
The statement asserts that the family $\{T_{\lambda}\}_{\lambda \in \Lambda}$ forms a filtered direct system and one can take its direct limit, which plays a role in the subsequent theorem.

Since $A \to R$ is generically \'etale, we can find $g \in A \setminus pA$ such that
$$
A[\frac{1}{pg}] \to R[\frac{1}{pg}]
$$
is finite \'etale. The localization $A[\frac{1}{pg}] \to R_{\lambda}[\frac{1}{pg}]$ is \'etale by assumption, so we can find an integral almost perfectoid, almost Cohen-Macaulay $R_{\lambda}$-algebra $T_{\lambda}$ by \cite[(18) in \S~4.2]{An2} with respect to $\pi^{\frac{1}{p^{\infty}}}$ and $\pi:=pg$. Notice that $T_{\lambda}$ contains a compatible system of $p^n$-th roots of $p$ (resp. $g$) for all $n>0$. It is necessary to look at the intrinsic structure of $T_{\lambda}$. 
Let $A \hookrightarrow R \hookrightarrow B:=R_{\lambda}$ be a composite of module-finite extensions for $\lambda \in \Lambda$, and let us put $T_{\lambda}:=\mathcal{B}^{\circ}$, where $\mathcal{B}^{\circ}$ appears as in \cite[\S~3.2]{An2} associated to the module-finite extension $A \to B$. Suppose that $R \hookrightarrow R_{\lambda_1} \hookrightarrow R_{\lambda_2}$ is a composite of module-finite extensions for $\lambda_1,\lambda_2 \in \Lambda$ with $\lambda_1 \le \lambda_2$. Then we obtain a commutative diagram
$$
\begin{CD}
T_{\lambda_1} @>>> T_{\lambda_2} \\
@AAA @AAA \\
R_{\lambda_1} @>>> R_{\lambda_2} \\
\end{CD}
$$
where $T_{\lambda_1} \to T_{\lambda_2}$ is an integral ring map, as one observes from the construction given in \cite[\S~3.2]{An2}. This completes the proof of the theorem.
\end{proof}

\section{Main theorems}

Let us establish the following theorem via Theorem \ref{almostCM}.

\begin{theorem}
\label{seedFontaine}
Let $(R,\fm)$ be a complete Noetherian local normal domain of dimension $d$ and mixed characteristic $p>0$ with perfect residue field $k$. Choose a system of parameters $p,x_2,\ldots,x_d$ of $R$ and $R_{\infty}$ as in Definition \ref{Faltings}.
Let
$$
R_{\infty}[\frac{1}{p}] \hookrightarrow R_{\infty}[\frac{1}{p}]^{\rm{\acute{e}t}}
$$
be the maximal \'etale extension inside $R^+[\frac{1}{p}]$. Let $\overline{R}$ denote the integral closure of $R$ in $R_{\infty}[\frac{1}{p}]^{\rm{\acute{e}t}}$. Then the following assertions hold:
\begin{enumerate}
\item
$\overline{R}$ is an integral pre-perfectoid normal domain.

\item
There exists a perfect $\mathbb{F}_p$-algebra $\mathscr{B}({\overline{R}^\flat})$, together with a ring homomorphism $\overline{R}^\flat \to \mathscr{B}({\overline{R}^\flat})$ such that $\mathscr{B}({\overline{R}^\flat})$ is a balanced big Cohen-Macaulay $\mathbb{F}_p[[p^\flat,x_2^\flat,\ldots,x_d^\flat]]$-algebra (see Lemma \ref{SmallFontaine} for the notation).
\end{enumerate}
\end{theorem}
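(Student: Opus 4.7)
For part (1), my plan is to invoke Lemma~\ref{semiperfect} with $A := R_{\infty}$. By Lemma~\ref{Fontaine}, $R_{\infty}$ is a henselian quasi-local normal domain that is $p$-torsion free with $p$ in its unique maximal ideal, so the hypotheses of Lemma~\ref{semiperfect} are satisfied. Since $R_{\infty}$ is integral over $R$, its absolute integral closure coincides with $R^+$, the maximal \'etale extension of $R_{\infty}[\frac{1}{p}]$ inside $R^+[\frac{1}{p}]$ is $R_{\infty}[\frac{1}{p}]^{\rm{\acute{e}t}}$, and the integral closure of $R_{\infty}$ inside this extension coincides with the integral closure $\overline{R}$ of $R$. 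Lemma~\ref{semiperfect} then produces a compatible sequence $\{\pi_n\}$ in $\overline{R}$ with $\pi_{n+1}^p = \pi_n$ and $\pi_1^p = p$, the surjectivity of Frobenius on $\overline{R}/p\overline{R}$, and the isomorphism $\overline{R}/\pi_1\overline{R} \cong \overline{R}/p\overline{R}$ induced by Frobenius. Taking $\varpi := \pi_1$ and $u := 1$ in Definition~\ref{preperfectoid}(1) yields precisely the integral pre-perfectoid conditions on $\overline{R}$, while normality is built into the construction.

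For part (2), my plan is to exploit Andr\'e's theorem to pass through an almost Cohen-Macaulay bridge, and then tilt into characteristic $p$. Organize $\overline{R}$ as a filtered direct union of module-finite local normal $R$-subalgebras $\{R_{\lambda}\}_{\lambda \in \Lambda}$ (quasi-locality of $\overline{R}$, inherited from that of $R_{\infty}$, secures locality of the $R_{\lambda}$). Choose $g \in A \setminus pA$ so that $A[\frac{1}{pg}] \to R[\frac{1}{pg}]$ is \'etale; by Lemma~\ref{Fontaine} together with the ind-\'etaleness of $R_{\infty}[\frac{1}{p}] \to R_{\infty}[\frac{1}{p}]^{\rm{\acute{e}t}}$, each $A[\frac{1}{pg}] \to R_{\lambda}[\frac{1}{pg}]$ is \'etale as well. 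Applying Theorem~\ref{almostCM} produces a compatible direct system $\{T_{\lambda}\}$ of integral almost perfectoid, almost Cohen-Macaulay $R$-algebras with respect to $(pg)^{\frac{1}{p^{\infty}}}$. Form $T_{\infty} := \varinjlim_{\lambda} T_{\lambda}$; since $\overline{R} = \bigcup_{\lambda} R_{\lambda}$, there is a natural ring map $\overline{R} \to T_{\infty}$, and the almost Cohen-Macaulay structure survives the colimit.

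The final step is the tilting. Lemma~\ref{SmallFontaine} applied to $T_{\infty}$ supplies a ring map $\mathbb{F}_p[[p^{\flat}, x_2^{\flat}, \ldots, x_d^{\flat}]] \to T_{\infty}^{\flat}$, and the tilting functor yields a natural map $\overline{R}^{\flat} \to T_{\infty}^{\flat}$. Because Fontaine rings only record reductions modulo $p$, the almost-regularity of the mixed-characteristic system of parameters $p, x_2, \ldots, x_d$ on $T_{\infty}$ modulo $(pg)^{\frac{1}{p^{\infty}}}$ translates, via Frobenius, to almost-regularity of $p^{\flat}, x_2^{\flat}, \ldots, x_d^{\flat}$ on $T_{\infty}^{\flat}$ modulo $(p^{\flat} g^{\flat})^{\frac{1}{p^{\infty}}}$, while nontriviality modulo the maximal ideal is guaranteed by Lemma~\ref{notalmostzero}. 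In characteristic $p > 0$, such a perfect almost Cohen-Macaulay algebra is a seed in the sense of Definition~\ref{seedalg}, through Hochster's partial algebra modifications, whose execution is made smooth by the Frobenius. Invoking Theorem~\ref{seed} then converts this seed into a balanced big Cohen-Macaulay $\mathbb{F}_p[[p^{\flat}, x_2^{\flat}, \ldots, x_d^{\flat}]]$-algebra $\mathscr{B}(\overline{R}^{\flat})$ equipped with a map from $\overline{R}^{\flat}$.

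The main obstacle will be the tilting step: verifying that the mixed-characteristic almost Cohen-Macaulay property of $T_{\infty}$ with respect to $(pg)^{\frac{1}{p^{\infty}}}$ genuinely transfers, under the Fontaine ring construction, to an almost Cohen-Macaulay property of $T_{\infty}^{\flat}$ over the regular local ring $\mathbb{F}_p[[p^{\flat}, x_2^{\flat}, \ldots, x_d^{\flat}]]$, and then that this almost Cohen-Macaulay algebra in equal characteristic $p$ is in fact a seed. Both transfers hinge on the compatibility of Koszul colons modulo $p$ with the tilting equivalence, and on the characteristic-$p$-specific techniques underlying Theorem~\ref{seed}.
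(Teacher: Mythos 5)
Your proposal follows essentially the same route as the paper: part (1) is exactly the paper's appeal to Lemma \ref{semiperfect}, and part (2) reproduces the paper's chain (filtered system of module-finite normal subalgebras \'etale away from $pg$, Theorem \ref{almostCM}, the colimit $T_{\infty}$, nontriviality via Lemma \ref{notalmostzero}, tilting, seed, Theorem \ref{seed}). The step you flag as the main obstacle --- transferring the almost Cohen--Macaulay property through the tilt --- is resolved in the paper by the almost isomorphism $T_{\infty}^\flat/p^\flat T_{\infty}^\flat \approx T_{\infty}/pT_{\infty}$ cited from Andr\'e, precisely the ``Koszul colons modulo $p$'' compatibility you describe; the only small corrections are that the regular local ring $\mathbb{F}_p[[p^\flat,x_2^\flat,\ldots,x_d^\flat]]$ should be located inside $\overline{R}^\flat$ (Lemma \ref{SmallFontaine} applies to pre-perfectoid subrings of $R^+$, not to $T_{\infty}$, though the composite $\mathbb{F}_p[[p^\flat,\ldots,x_d^\flat]] \to \overline{R}^\flat \to T_{\infty}^\flat$ gives what you need) and that $g$ must be replaced by $gx_2\cdots x_d$ so that every $A[\frac{1}{pg}] \to R_{\lambda}[\frac{1}{pg}]$ is \'etale.
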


\begin{proof}
The assertion $\rm (1)$ is due to Lemma \ref{semiperfect}.

We prove the assertion $\rm (2)$. Take a module-finite extension $A:=W(k)[[x_2,\ldots,x_d]] \hookrightarrow R$. Since $A \to R$ is generically \'etale, there is a nonzero element $g \in A \setminus pA$ such that
$$
A[\frac{1}{pg}] \hookrightarrow R[\frac{1}{pg}]
$$
is finite \'etale. It follows from Lemma \ref{Fontaine} that
$$
R[\frac{1}{px_2 \cdots x_d}] \hookrightarrow R_{\infty}[\frac{1}{px_2 \cdots x_d}]
$$
is ind-\'etale. Let $\overline{R}$ be as in the hypothesis of the theorem. Then the integral extension
$$
A[\frac{1}{pgx_2\cdots x_d}] \hookrightarrow \overline{R}[\frac{1}{pgx_2\cdots x_d}]
$$
is ind-\'etale. For simplicity, we write $g$ for $gx_2\cdots x_d$. By the definition of ind-\'etale extensions, one can present $\overline{R}$ as the colimit $\varinjlim_{\lambda \in \Lambda} R_{\lambda}$ such that $R \hookrightarrow R_\lambda$ is module-finite, $R_\lambda$ is a Noetherian normal domain contained in $\overline{R}$, and $A[\frac{1}{pg}] \to R_\lambda[\frac{1}{pg}]$ is a finite \'etale 
extension for all $\lambda \in \Lambda$. Theorem \ref{almostCM} then yields a commutative diagram
$$
\begin{CD}
T_{\lambda_1} @>>> T_{\lambda_2} \\
@AAA @AAA \\
R_{\lambda_1} @>>> R_{\lambda_2} \\
\end{CD}
$$
for any pair $\lambda_1 \le \lambda_2$ in $\Lambda$. Hence, $\{T_{\lambda}\}_{\lambda \in \Lambda}$ forms a filtered direct system and denote by $T_{\infty}$ its direct limit. Notice that the $p$-adic completed algebra $T_{\infty}^{\wedge}$ is an integral almost perfectoid algebra. Now we claim that 
\begin{equation}
\label{notcontain}
\frac{T_{\infty}}{(p,x_2,\ldots,x_d)T_{\infty}}~\mbox{is not almost zero with respect to}~\pi^{\frac{1}{p^{\infty}}}.
\end{equation}
For the proof of this claim, we may proceed as in the discussions given in \cite[\S~4.2]{An2}: It is shown that $T_{\lambda}$ maps to a big Cohen-Macaulay $R$-algebra for $\lambda \in \Lambda$ by \cite[Proposition 4.1.2]{An2}. That is, $T_{\lambda}$ is a seed over $R$ for all $\lambda \in \Lambda$. Therefore, 
$(\ref{notcontain})$ follows by Lemma \ref{notalmostzero}. It follows that the $p$-adic completion $T_{\infty}^{\wedge}$ is an integral almost perfectoid, almost Cohen-Macaulay algebra and we have $T_{\infty}^\flat \cong (T_{\infty}^\wedge)^{\flat}$.

On the other hand, $\overline{R}$ is an integral pre-perfectoid normal domain, which implies that the natural map $\overline{R}^\flat \to \overline{R}/p\overline{R}$ is surjective. Now we get the homomorphism of Fontaine rings:
$$
\overline{R}^\flat \to T_{\infty}^\flat.
$$
Let $p^\flat,x_2^\flat,\ldots,x_d^\flat \in \overline{R}^\flat$ be a sequence of elements as in the theorem. Then $\overline{R}^\flat$ contains a complete regular local ring $\mathbb{F}_p[[p^\flat,x_2^\flat,\ldots,x_d^\flat]]$. Now it suffices to show that $T_{\infty}^\flat$ is a seed over $\mathbb{F}_p[[p^\flat,x_2^\flat,\ldots,x_d^\flat]]$. There is an injection $T_{\infty}^\flat/p^\flat T_{\infty}^\flat \hookrightarrow T_{\infty}/pT_{\infty}$ whose cokernel is almost zero with respect to $\pi^{\frac{1}{p^{\infty}}}$ in view of \cite[Proposition 3.5.4]{An1}, and we find that
$$
(\pi^\flat)^{\frac{1}{p^n}} \cdot \dfrac{\big((p^\flat,x_2^\flat,\ldots,x_i^\flat):_{\mathcal{T}_{\infty}^\flat} x_{i+1}^\flat \big)}{(p^\flat,x_2^\flat,\ldots,x_i^\flat)}=0
$$
for all integers $n >0$ and $0 \le i \le \dim R-1$. Moreover, the almost isomorphism
$$
\frac{T_{\infty}^\flat}{p^\flat T_{\infty}^\flat} \approx \frac{T_{\infty}}{pT_{\infty}}
$$
induces an almost isomorphism
$$
\frac{T_{\infty}^\flat}{(p^\flat,x_2^\flat,\ldots,x_d^\flat)T_{\infty}^\flat} \approx \frac{T_{\infty}}{(p,x_2,\ldots,x_d)T_{\infty}}.
$$
Then it follows from $(\ref{notcontain})$ that $T_{\infty}^\flat/(p^\flat,x_2^\flat,\ldots,x_d^\flat)T_{\infty}^\flat$ is not almost zero with respect to $(\pi^\flat)^{\frac{1}{p^{\infty}}}$. Thus, we find that $T_{\infty}^\flat$ is a seed over $\mathbb{F}_p[[p^\flat,x_2^\flat,\ldots,x_d^\flat]]$ in view of \cite[Proposition 4.1.2]{An2}. By applying Theorem \ref{seed}, we find that $T_{\infty}^\flat$ maps to a perfect balanced big Cohen-Macaulay $\mathbb{F}_p[[p^\flat,x_2^\flat,\ldots,x_d^\flat]]$-algebra $\mathscr{B}(\overline{R}^\flat)$. This completes the proof of the theorem.
\end{proof}

\begin{corollary}
\label{AbBig}
Let $(R,\fm)$ be a complete Noetherian local normal domain of mixed characteristic with perfect residue field and let $R^+$ be its absolute integral closure. Then the Fontaine ring $(R^+)^\flat$ maps to a perfect big Cohen-Macaulay $\mathbb{F}_p[[p^\flat,x_2^\flat,\ldots,x_d^\flat]]$-algebra.
\end{corollary}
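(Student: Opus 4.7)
The strategy is to apply Theorem \ref{seedFontaine} to produce an absolutely integrally closed big Cohen-Macaulay algebra receiving $\overline{R}^\flat$, and then to extend the homomorphism from $\overline{R}^\flat$ to $(R^+)^\flat$ using the universal property of absolutely integrally closed domains.

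First, Theorem \ref{seedFontaine} furnishes a perfect balanced big Cohen-Macaulay $\mathbb{F}_p[[p^\flat, x_2^\flat, \ldots, x_d^\flat]]$-algebra $\mathscr{B}(\overline{R}^\flat)$ together with a map $\overline{R}^\flat \to \mathscr{B}(\overline{R}^\flat)$. Applying Theorem \ref{seed}(2) to this seed, one further maps $\mathscr{B}(\overline{R}^\flat)$ into an absolutely integrally closed, $\fm$-adically separated, quasilocal balanced big Cohen-Macaulay $\mathbb{F}_p[[p^\flat, x_2^\flat, \ldots, x_d^\flat]]$-algebra domain $B$, producing a ring homomorphism $\overline{R}^\flat \to B$.

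Next, I would show that every element of $(R^+)^\flat$ is integral over the image of $\overline{R}^\flat$. Since $R^+$ is the filtered union of module-finite normal extensions of $\overline{R}$, every element of $R^+$ satisfies a monic polynomial over $\overline{R}$; transferring such a polynomial relation through the Fontaine ring (using that both $\overline{R}$ and $R^+$ are pre-perfectoid and share the compatible systems $\{p^{1/p^n}\}, \{x_i^{1/p^n}\}$) produces a monic polynomial over $\overline{R}^\flat$ killing the given element of $(R^+)^\flat$. Because $R^+$ is a $p$-torsion free normal domain containing $\{p^{1/p^n}\}$, its tilt $(R^+)^\flat$ is a perfect $\mathbb{F}_p$-domain, so this integral extension lives inside an algebraic closure of $\Frac(\overline{R}^\flat)$. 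Since $B$ is an absolutely integrally closed domain, the map $\overline{R}^\flat \to B$ extends (after fixing an embedding $\Frac(\overline{R}^\flat) \hookrightarrow \Frac(B)$ and using that $(R^+)^\flat$ consists of elements integral over $\overline{R}^\flat$) to a ring homomorphism $(R^+)^\flat \to B$, which gives the desired conclusion.

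The main obstacle is establishing the integrality of $(R^+)^\flat$ over $\overline{R}^\flat$. This is a tilting-type statement: integral extensions of pre-perfectoid algebras with enough $p$-power roots should induce integral extensions of their Fontaine rings. Although standard in perfectoid theory, the argument requires care because the Fontaine ring functor involves an inverse limit of Frobenius twists and the map $\overline{R}^\flat \to (R^+)^\flat$ need not be injective in full generality. A clean approach is to argue element-wise, lifting a given $\alpha \in (R^+)^\flat$ via compatible Teichm\"uller representatives and invoking the minimal polynomial of a lift of $\alpha$ in $R^+$ over $\overline{R}$ to produce the required integral equation on the tilted side.
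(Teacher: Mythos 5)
Your reduction to $\overline{R}$ and the extension step into an absolutely integrally closed big Cohen--Macaulay domain are reasonable in outline, but the proof hinges on the claim that $(R^+)^\flat$ is integral over (the image of) $\overline{R}^\flat$, and this claim is false in general --- it is not merely a delicate point to be checked. By Proposition \ref{FontaineRing}, $(R^+)^\flat$ is $p^\flat$-adically \emph{complete}, whereas the set of elements of an algebraic closure of $\Frac(\overline{R}^\flat)$ that are integral over $\overline{R}^\flat$ is essentially never $p^\flat$-adically complete; this is the same phenomenon as $\widehat{\overline{K}}{}^\flat \cong \widehat{\overline{K^\flat}}$ for a perfectoid field $K$, where the tilt of the completed algebraic closure is the \emph{completion} of an algebraic extension of $K^\flat$, not an algebraic extension itself. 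Your proposed element-wise argument also does not repair this: a lift $\tilde{a}_0 \in R^+$ of the zeroth component of $\alpha=(a_0,a_1,\ldots)$ satisfies a monic equation over $\overline{R}$, but such an equation only constrains $\alpha$ modulo $\ker(\Phi_{R^+})=p^\flat (R^+)^\flat$ (and its coefficients live in $\overline{R}/p\overline{R}$, not canonically in $\overline{R}^\flat$); integrality modulo $p^\flat$, or even modulo all powers of $p^\flat$, does not yield integrality of $\alpha$ itself, again because of the completion. (A minor point in your favor: the map $\overline{R}^\flat \to (R^+)^\flat$ \emph{is} injective here, since normality of $\overline{R}$ gives $pR^+\cap\overline{R}=p\overline{R}$, so injectivity is not the obstacle.)

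The paper circumvents exactly this difficulty by not working with a single $\overline{R}$: it writes $R^+=\varinjlim_S \overline{S}$ over all module-finite normal extensions $R\to S$ inside $R^+$, applies Theorem \ref{seedFontaine} to each $\overline{S}$ to see that every $\overline{S}^\flat$ is a seed over $\mathbb{F}_p[[p^\flat,x_2^\flat,\ldots,x_d^\flat]]$, and invokes Dietz's result that a filtered colimit of seeds is a seed to conclude that $D((R^+)^\flat):=\varinjlim_S \overline{S}^\flat$ maps to a perfect big Cohen--Macaulay algebra. The crucial remaining step --- replacing your integrality claim --- is the identification of $(R^+)^\flat$ with the $p^\flat$-adic \emph{completion} of $D((R^+)^\flat)$ (proved by checking the map is an isomorphism modulo $p^\flat$ and using perfectness to pass to all powers $(p^\flat)^{p^k}$), after which one completes the big Cohen--Macaulay target as well. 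If you want to salvage your approach, you must insert this colimit-and-complete mechanism in place of the integrality assertion.
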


\begin{proof}
Let $R \to S$ be a module-finite extension of normal domains. Fix a module-finite extension:
$$
A:=W(k)[[x_2,\ldots,x_d]] \hookrightarrow R.
$$
As in Theorem \ref{seedFontaine}, we define an integral pre-perfectoid normal
domain $\overline{S}$ as the integral closure $S$ in $S_{\infty}[\frac{1}{p}]^{\rm{\acute{e}t}}$. Notice that $R \to R^+$ is an integral extension and $R^+$ is normal. So we find that $R^+$ is described as a filtered colimit of all module-finite normal $R$-algebras that are contained in $R^+$. Then we have $R^+=\varinjlim_{\{R \to S \}} S$, where $R \to S$ ranges over all module-finite normal $R$-algebras contained in $R^+$. Since we have $S \hookrightarrow \overline{S} \hookrightarrow R^+$, it follows that $R^+=\varinjlim_{\{R \to \overline{S}\}} \overline{S}$. Consider the direct system of Fontaine rings $\{\overline{S}^\flat\}_{\{R \to \overline{S}\}}$ which is induced by applying the construction of Fontaine rings to $\{\overline{S}\}_{\{R \to \overline{S}\}}$. Moreover by Theorem \ref{seedFontaine}, $\{\overline{S}^\flat\}_{\{R \to \overline{S}\}}$ forms a direct system of seeds over $\mathbb{F}_p[[p^\flat,x_2^\flat,\ldots,x_d^\flat]]$. Hence, its direct limit
$$
D((R^+)^\flat):=\varinjlim_{\{R \to \overline{S}\}} \overline{S}^\flat
$$
is a seed over $\mathbb{F}_p[[p^\flat,x_2^\flat,\ldots,x_d^\flat]]$ in view of \cite[Lemma 3.2]{Di07}. So it follows from Theorem \ref{seed} that there is a perfect big Cohen-Macaulay $\mathbb{F}_p[[p^\flat,x_2^\flat,\ldots,x_d^\flat]]$-algebra $\mathscr{B}(D((R^+)^\flat))$.

Next, we prove that the $p^\flat$-adic completion of $D((R^+)^\flat)$ is isomorphic to $(R^+)^\flat$.
There are a natural homomorphism $\varinjlim_{\{R \to \overline{S}\}} \overline{S}^\flat \to (R^+)^\flat$ and
an isomorphism $\varinjlim_{\{R \to \overline{S}\}} \overline{S}/p\overline{S} \cong R^+/pR^+$. As $\overline{S}^\flat$ surjects onto $\overline{S}/p\overline{S}$ with kernel equal to $p^\flat \overline{R}^\flat$, and similarly $(R^+)^\flat$ surjects onto $R^+/pR^+$ with kernel equal to $p^\flat (R^+)^\flat$ by
Proposition \ref{FontaineRing}, it follows that the induced homomorphism
$$
\frac{D((R^+)^\flat)}{p^\flat D((R^+)^\flat)} \to \frac{(R^+)^\flat}{p^\flat (R^+)^\flat}
$$
is an isomorphism. As $D((R^+)^\flat)$ and $(R^+)^\flat$ are perfect $\mathbb{F}_p$-algebras, $k$ times iteration of the Frobenius yields isomorphisms:
$$
\frac{D((R^+)^\flat)}{p^\flat D((R^+)^\flat)} \cong \frac{D((R^+)^\flat)}{(p^\flat)^{p^k} D((R^+)^\flat)}~\mbox{and}~\frac{(R^+)^\flat}{p^\flat (R^+)^\flat} \cong \frac{(R^+)^\flat}{(p^\flat)^{p^k} (R^+)^\flat}.
$$
These facts imply that the $p^\flat$-adic completion of $D((R^+)^\flat)$ will be isomorphic to $(R^+)^\flat$. Taking the $p^\flat$-adic completion of the map $D((R^+)^\flat) \to \mathscr{B}(D((R^+)^\flat))$, we construct a perfect big Cohen-Macaulay algebra over $(R^+)^\flat$.\footnote{It is shown in \cite[Lemma 3.6]{Di07} that the ideal-adic completion of a perfect $\mathbb{F}_p$-algebra remains perfect.}
\end{proof}

Now we can prove the following theorem.

\begin{theorem}
\label{integralperfect}
Let $(R,\fm)$ be a Noetherian local domain of mixed characteristic. Then there exists an $R$-algebra $T$ such that $T$ is an integral perfectoid big Cohen-Macaulay $R$-algebra. 

Moreover, we have the following assertions:
\begin{enumerate}
\item
Assume that $R$ is a complete Noetherian local domain of mixed characteristic with perfect residue field. Then there exists an integral perfectoid big Cohen-Macaulay $R$-algebra $T$ with the property that $R \to T$ factors as $R \to S \to T$, such that $S$ is an integral pre-perfectoid normal domain that is integral over $R$ and $R[\frac{1}{pg}] \to S[\frac{1}{pg}]$ is ind-\'etale for some nonzero element $g \in R$.

\item
Assume that $R$ is a complete Noetherian local domain of mixed characteristic. Let $B$ be an integral almost perfectoid, almost Cohen-Macaulay $R$-algebra such that $R \to B$ factors as $R \to S \to B$ and $S$ is an integral perfectoid algebra containing compatible systems of elements: $
\{p^{\frac{1}{p^n}}\}_{n  \ge 0},\{x_2^{\frac{1}{p^n}}\}_{n  \ge 0},\ldots,\{x_d^{\frac{1}{p^n}}\}_{n  \ge 0}$ for a system of parameters $p,x_2,\ldots,x_d$ of $R$. Then there is a ring homomorphism $S \to T$ such that $T$ is an integral perfectoid big Cohen-Macaulay $R$-algebra.
\end{enumerate}
\end{theorem}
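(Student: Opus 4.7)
The plan is to obtain $T$ by the standard \emph{tilt--untilt} roundtrip: start from the pre-perfectoid normal domain $\overline{R}$ produced in Theorem~\ref{seedFontaine}, pass to its Fontaine ring, replace it by the perfect big Cohen--Macaulay algebra that Theorem~\ref{seedFontaine} already supplies, and untilt the result via $p$-typical Witt vectors. First I reduce the main statement to the hypotheses of Theorem~\ref{seedFontaine}: completing $R$ preserves being a local domain and its systems of parameters and does not affect the existence of big Cohen--Macaulay algebras; a standard faithfully flat local extension upgrades the residue field to perfect; and passing to the normalization in an appropriate finite extension makes $R$ normal. Any integral perfectoid big Cohen--Macaulay algebra over the resulting ring is one over the original $R$ by restriction of scalars, so the reduction is harmless.

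\textbf{Untilting construction.} Fix a system of parameters $p, x_2, \ldots, x_d$ of $R$ together with compatible $p$-power roots in $R^+$, and let $\overline{R}$ and $\mathscr{B} := \mathscr{B}(\overline{R}^\flat)$ be as in Theorem~\ref{seedFontaine}, with the given map $\overline{R}^\flat \to \mathscr{B}$. Since $\mathscr{B}$ is a perfect $\mathbb{F}_p$-algebra, $W(\mathscr{B})$ is $p$-torsion free and $p$-adically complete with $W(\mathscr{B})/pW(\mathscr{B}) \cong \mathscr{B}$. The element $\vartheta := \theta_{\overline{R}^\flat}(p^\flat) - p$ of Proposition~\ref{Teichmuller} transports functorially to $W(\mathscr{B})$, and I set
$$
T := \frac{W(\mathscr{B})}{\vartheta\,W(\mathscr{B})}.
$$
By functoriality of the Witt vector construction applied to $\overline{R}^\flat \to \mathscr{B}$, the chain $R \to \overline{R} \to \overline{R}^{\wedge} \cong W(\overline{R}^\flat)/\vartheta W(\overline{R}^\flat) \to T$ makes $T$ an $R$-algebra, and for part (1) it suffices to take $S := \overline{R}$, whose required properties are exactly what Theorem~\ref{seedFontaine} furnishes.

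\textbf{Checking the two properties.} To verify that $T$ is integral perfectoid, I argue that $(\vartheta, p)$ is a regular sequence on $W(\mathscr{B})$: reducing $\vartheta$ modulo $p$ gives the image of $p^\flat$ in $\mathscr{B}$, which is a nonzero divisor because $p^\flat$ is part of the regular system of parameters on the big Cohen--Macaulay algebra $\mathscr{B}$; a short unwinding using $p$-adic separatedness of $W(\mathscr{B})$ then shows $\vartheta$ itself is a nonzero divisor on $W(\mathscr{B})$ and that $p$ survives as a nonzero divisor on the quotient $T$. The identification $T/pT \cong \mathscr{B}/p^\flat \mathscr{B}$ makes the Frobenius on $T/pT$ surjective with principal kernel generated by $(p^\flat)^{1/p}$ (exploiting perfectness of $\mathscr{B}$), and $p$-adic completeness of $T$ is inherited from that of $W(\mathscr{B})$. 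For the big Cohen--Macaulay property, I use Lemma~\ref{Witt} to identify the image of each $x_i \in R$ in $T$ with $\theta(x_i^\flat)$ modulo $\vartheta$, whose further reduction modulo $p$ is $x_i^\flat \in \mathscr{B}$; then $T/(p, x_2, \ldots, x_d)T \cong \mathscr{B}/(p^\flat, x_2^\flat, \ldots, x_d^\flat)\mathscr{B}$ is nonzero and the regular sequence $p^\flat, x_2^\flat, \ldots, x_d^\flat$ on $\mathscr{B}$ lifts to a regular sequence $p, x_2, \ldots, x_d$ on $T$. Part (2) is handled by an identical argument with the given $B$ playing the role of $T_\infty$: its tilt $B^\flat$ is a seed over the small Fontaine ring by \cite[Proposition 4.1.2]{An2} and Lemma~\ref{notalmostzero} (using almost Cohen--Macaulayness of $B$ together with the almost isomorphism $B^\flat/p^\flat \approx B/p$), Dietz's Theorem~\ref{seed} produces a perfect big Cohen--Macaulay algebra, and the same untilting yields the required $S \to T$.

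\textbf{Main obstacle.} The hardest point is matching the algebraic structures across the tilt--untilt equivalence so that both ``integral perfectoid'' and ``big Cohen--Macaulay'' come out correctly at the end. This rests on two compatibilities. First, that the image of $x_i \in R \subset \overline{R}$ in $T$ coincides, modulo $\vartheta$, with the Teichm\"uller lift $\theta(x_i^\flat)$, reflecting precisely Lemma~\ref{Witt} and the choice of compatible systems $\{x_i^{1/p^n}\}$. Second, that $\vartheta$ remains a nonzero divisor on $W(\mathscr{B})$ even though $\mathscr{B}$ is not a priori the tilt of any integral pre-perfectoid algebra, so that Proposition~\ref{Teichmuller} cannot be invoked directly; this has to be redone by hand using the $p$-adic separatedness of $W(\mathscr{B})$ together with $p^\flat$ being a nonzero divisor on $\mathscr{B}$. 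Once these are in place, the regular sequence and perfectoid conditions transfer cleanly from $\mathscr{B}$ back to $T$.
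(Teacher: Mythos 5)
Your proposal is correct and follows essentially the same route as the paper: reduce to the complete normal case with perfect residue field, take $S=\overline{R}$ and $\mathscr{B}(\overline{R}^\flat)$ from Theorem~\ref{seedFontaine}, and untilt by setting $T=W(\mathscr{B})/\vartheta W(\mathscr{B})$, then verify $p$-adic completeness, the perfectoid conditions via $T/pT\cong\mathscr{B}/p^\flat\mathscr{B}$, and the regular sequence property. Your explicit check that $(\vartheta,p)$ is regular on $W(\mathscr{B})$ (needed for the $\mathbb{Z}_p$-flatness the paper invokes) is a welcome addition, but the argument is the same.
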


\begin{proof}
We first establish the existence of an integral perfectoid big Cohen-Macaulay $R$-algebra. Let us enlarge the residue field $R/\fm$ to the perfect residue field, so we get a flat local homomorphism $(R,\fm) \to (R',\fm')$ of same Krull dimension such that $R'/\fm'$ is a perfect field. After killing some minimal prime ideal of the $\fm'$-adic completion of $R'$, take its normalization in the field of fractions. So we may assume that $R$ is a complete Noetherian local normal domain of mixed characteristic with perfect residue field without loss of generality. Let $S:=\overline{R}$ be as in Theorem \ref{seedFontaine} $\rm(1)$. Then we get an $\overline{R}^\flat$-algebra $\mathscr{B}(\overline{R}^\flat)$ as constructed in Theorem \ref{seedFontaine} $\rm(2)$. Applying Proposition \ref{Teichmuller}, we get ring homomorphisms:
$$
\overline{R} \to \overline{R}^{\wedge} \cong \frac{W(\overline{R}^\flat)}{\vartheta \cdot W(\overline{R}^\flat)} \to \frac{W(\mathscr{B}(\overline{R}^\flat))}{\vartheta \cdot W(\mathscr{B}(\overline{R}^\flat))}.
$$

Let us put
$$
T:=\frac{W(\mathscr{B}(\overline{R}^\flat))}{\vartheta \cdot W(\mathscr{B}(\overline{R}^\flat))}.
$$
We show that $T$ is $p$-adically complete. To this aim, let us consider the exact sequence
\begin{equation}
\label{exact}
0 \to W(\mathscr{B}(\overline{R}^\flat)) \xrightarrow{\times \vartheta} W(\mathscr{B}(\overline{R}^\flat)) \to T \to 0.
\end{equation}
Since $T$ is a $\mathbb{Z}_p$-flat algebra, applying $(-) \otimes_{\mathbb{Z}_p} \mathbb{Z}_p/p^n \mathbb{Z}_p$ to the sequence, we get a short exact sequence:
$$
0 \to \frac{W(\mathscr{B}(\overline{R}^\flat))}{p^n W(\mathscr{B}(\overline{R}^\flat))} \xrightarrow{\times \vartheta} \frac{W(\mathscr{B}(\overline{R}^\flat))}{p^n W(\mathscr{B}(\overline{R}^\flat))} \to \frac{T}{p^n T} \to 0.
$$
Then taking the inverse limit with respect to $n>0$, together with the snake lemma, it follows that $T$ is $p$-adically complete. Next, we show that the Frobenius map is surjective on $T/pT$. This follows from the isomorphisms:
$$
\frac{T}{pT} \cong \frac{W(\mathscr{B}(\overline{R}^\flat))}{(p^\flat,p)W(\mathscr{B}(\overline{R}^\flat))} \cong
\frac{\mathscr{B}(\overline{R}^\flat)}{p^\flat \mathscr{B}(\overline{R}^\flat)},
$$
together with the fact that $\mathscr{B}(\overline{R}^\flat)$ is a perfect $\mathbb{F}_p$-algebra. Moreover, let $\varpi$ be the image of $(p^\flat)^{\frac{1}{p}}$ via the composite maps $\mathscr{B}(\overline{R}^\flat) \to W(\mathscr{B}(\overline{R}^\flat)) \to T$, where the first map is just the multiplicative map $\theta_{\mathscr{B}(\overline{R}^\flat)}$. Then we have $\varpi^p=p$ and the Frobenius bijection on $\mathscr{B}(\overline{R}^\flat)$ induces an isomorphism:
$$
 \frac{T}{\varpi T} \cong \frac{\mathscr{B}(\overline{R}^\flat)}{(p^\flat)^{\frac{1}{p}} \mathscr{B}(\overline{R}^\flat)}
 \xrightarrow{F} \frac{\mathscr{B}(\overline{R}^\flat)}{p^\flat \mathscr{B}(\overline{R}^\flat)}
\cong \frac{T}{pT},
$$
which proves that $T$ is an integral perfectoid $R$-algebra. Finally, in view of Theorem \ref{seedFontaine} $\rm(2)$, it follows that $p,x_2,\ldots,x_d$ is a regular sequence on $T$. Hence we complete the proof of the existence of an integral perfectoid big Cohen-Macaulay $R$-algebra. 

The assertion $\rm(1)$ is clear from the construction given in the above proof and in Theorem \ref{seedFontaine}. Notice that $R[\frac{1}{pg}]$ is a normal domain as in the construction of the proof of Theorem \ref{seedFontaine}, even though $R$ is not assumed to be normal. For the assertion $\rm(2)$, one concludes from the proof of Theorem \ref{seedFontaine} that $B^\flat$ maps to a perfect big Cohen-Macaulay $\mathbb{F}_p[[p^\flat,x_2^\flat,\ldots,x_d^\flat]]$-algebra $\mathscr{B}(B^\flat)$. Applying the construction of Witt vectors as above to $S^\flat \to \mathscr{B}(B^\flat)$, we get an integral perfectoid big Cohen-Macaulay $S$-algebra, as desired. We now complete the proof.
\end{proof}

\begin{remark}
In the second statement $\rm(2)$ of Theorem \ref{integralperfect}, one can indeed prove that there is a ring homomorphism $B^{\natural} \to T$, where $B^{\natural}$ denotes the untilt of the tilt of $B$ (see \cite{KL15} and \cite{Sch12} for the tilting and tilting correspondence). That is,
$$
B^{\natural}:=\frac{W(B^\flat)}{\vartheta \cdot  W(B^\flat)}.
$$
However, it is not necessarily true that $B^{\natural}$ is isomorphic to $B$.
\end{remark}

One can indeed prove the existence of an integral perfectoid big Cohen-Macaulay algebra over an absolutely integrally closed domain.

\begin{corollary}
\label{absoluteBig}
Let $(R,\fm)$ be a Noetherian local domain of mixed characteristic and let $R^+$ be its absolute integral closure. Then $R^+$ maps to an integral perfectoid big Cohen-Macaulay $R$-algebra.
\end{corollary}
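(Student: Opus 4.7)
The plan is to apply the Witt-vector untilting machinery of Proposition \ref{Teichmuller} directly to $R^+$, using the perfect big Cohen-Macaulay algebra over $(R^+)^\flat$ produced by Corollary \ref{AbBig}. This parallels the construction of $T$ in the proof of Theorem \ref{integralperfect}, with $R^+$ playing the role of the intermediate ring $\overline{R}$.

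First I would perform the standard reduction used at the start of the proof of Theorem \ref{integralperfect}: enlarge the residue field to a perfect one via a flat local extension $R \to R'$, kill a minimal prime of $(R')^\wedge$ that contracts to zero in $R$ (such a prime exists because $R$ is a domain and $R \hookrightarrow (R')^\wedge$ is faithfully flat), and normalize. Choosing algebraic closures compatibly produces an embedding $R^+ \hookrightarrow \widetilde{R}^+$, where $\widetilde{R}$ is a complete Noetherian local normal domain of mixed characteristic with perfect residue field. Any integral perfectoid big Cohen-Macaulay $\widetilde{R}$-algebra admitting a map from $\widetilde{R}^+$ is then automatically an integral perfectoid big Cohen-Macaulay $R$-algebra admitting a map from $R^+$.

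Assume henceforth that $R$ is such a complete normal domain with system of parameters $p, x_2, \ldots, x_d$. Then $R^+$ is integral pre-perfectoid: with $\varpi := p^{1/p}$ one has $\varpi^p = p$, Frobenius on $R^+/pR^+$ is surjective because $R^+$ is absolutely integrally closed, and its kernel equals $\varpi R^+$ since $a^p \in pR^+$ forces $(a/p^{1/p})^p \in R^+$, so $a/p^{1/p}$ is integral over $R^+$ and therefore lies in $R^+$. Corollary \ref{AbBig} supplies a perfect balanced big Cohen-Macaulay $\mathbb{F}_p[[p^\flat, x_2^\flat, \ldots, x_d^\flat]]$-algebra $\mathscr{B} := \mathscr{B}((R^+)^\flat)$ together with a ring map $(R^+)^\flat \to \mathscr{B}$. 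Setting
$$
T := \frac{W(\mathscr{B})}{\vartheta \cdot W(\mathscr{B})}, \qquad \vartheta := \theta_{(R^+)^\flat}(p^\flat) - p,
$$
Proposition \ref{Teichmuller} applied to $R^+$ together with functoriality of the Witt vector construction yields the composition $R^+ \to (R^+)^\wedge \cong W((R^+)^\flat)/\vartheta W((R^+)^\flat) \to T$.

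The verification that $T$ is integral perfectoid and a big Cohen-Macaulay $R$-algebra then proceeds exactly as in the proof of Theorem \ref{integralperfect}: the $\vartheta$-short exact sequence and the snake lemma show that $T$ is $\mathbb{Z}_p$-flat and $p$-adically complete; the isomorphism $T/pT \cong \mathscr{B}/p^\flat \mathscr{B}$ identifies Frobenius on $T/pT$ with the perfect structure on $\mathscr{B}$ and exhibits a principal kernel generated by the image of $\theta((p^\flat)^{1/p})$; and the analogous isomorphisms $T/(p, x_2, \ldots, x_i)T \cong \mathscr{B}/(p^\flat, x_2^\flat, \ldots, x_i^\flat)\mathscr{B}$ transport the big Cohen-Macaulay property of $\mathscr{B}$ over $\mathbb{F}_p[[p^\flat, x_2^\flat, \ldots, x_d^\flat]]$ into regularity of $p, x_2, \ldots, x_d$ on $T$ with $\fm T \ne T$, using $p$-torsion freeness and $p$-adic completeness to promote regularity modulo $p$ to full regularity. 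The principal subtlety I anticipate is the bookkeeping in the reduction step, ensuring that $R^+$ itself (rather than only some module-finite normal subalgebra) really maps into the big Cohen-Macaulay algebra constructed downstream; once this is arranged, the remainder is essentially a repackaging of the untilting machinery already deployed for Theorem \ref{integralperfect}.
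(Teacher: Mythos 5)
Your proposal is correct and takes essentially the same route as the paper: reduce to the complete normal case with perfect residue field via a flat local extension, invoke Corollary \ref{AbBig} to obtain a perfect big Cohen--Macaulay algebra over $(R^+)^\flat$, and untilt via $W(-)/\vartheta$ exactly as in the proof of Theorem \ref{integralperfect}, with $R^+$ playing the role of $\overline{R}$. The paper's own proof is merely a two-sentence pointer to this same construction, so your elaboration (including the check that $R^+$ is integral pre-perfectoid and the bookkeeping in the reduction step) fills in details the paper leaves implicit.
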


\begin{proof}
Indeed, we can construct a flat local extension of $R$, so that we may assume that $R$ is a complete Noetherian local normal domain with perfect residue field. The corollary then follows by combining Corollary \ref{AbBig}, together with the proof of Theorem \ref{integralperfect} by applying to $R^+$.
\end{proof}

\begin{remark}
\label{completion}
Let $T$ be an integral perfectoid algebra and let $J \subset T$ be a finitely generated ideal such that $p \in J$. It can be proved that the $J$-adic completion of $T$ remains integral perfectoid (see \cite[Proposition 3.6.19]{KL15} for the proof of this fact). By taking the $\fm$-adic completion of the algebra obtained in Corollary \ref{absoluteBig}, $R^+$ can be mapped into an integral perfectoid \textit{balanced} big Cohen-Macaulay algebra.
\end{remark}

From the discussions we have made so far, one easily deduces the following.

\begin{proposition}
\label{corFontaine}
Let $(R,\fm)$ be a complete Noetherian local domain of mixed characteristic with a fixed system of parameters $\mathbf{x}:=p,\ldots,x_d$. Fix an integral perfectoid $R$-algebra $S$ which contains a compatible system of elements:
$$
\{p^{\frac{1}{p^n}}\}_{n  \ge 0},\{x_2^{\frac{1}{p^n}}\}_{n  \ge 0},\ldots,\{x_d^{\frac{1}{p^n}}\}_{n  \ge 0}.
$$
Let $\mathbf{x}^\flat:=p^\flat,\ldots,x_d^\flat$ be lifts of $\mathbf{x}:=p,\ldots,x_d$ to $S^\flat$. Then there is an equivalence of categories:
$$
\biggl\{\mbox{Integral perfectoid big Cohen-Macaulay}~S\mbox{-algebras with respect to}~\mathbf{x}\biggl\}
$$
$$
\to \biggl\{p^\flat\mbox{-adically complete perfect big Cohen-Macaulay}~S^\flat\mbox{-algebras with respect to}~\mathbf{x}^\flat\biggl\},
$$
which is defined by $T \mapsto T^\flat$.
\end{proposition}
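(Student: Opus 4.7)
The plan is to establish Proposition \ref{corFontaine} in two stages: first, an underlying tilting/untilting equivalence between integral perfectoid $S$-algebras and $p^\flat$-adically complete perfect $S^\flat$-algebras, and second, the preservation of the big Cohen-Macaulay condition under this equivalence.

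For the inverse functor, I would set $A^{\natural} := W(A)/(\vartheta \cdot W(A))$ with $\vartheta := \theta_{A}(p^\flat) - p$, mimicking the construction at the heart of Theorem \ref{integralperfect}. Repeating that argument verbatim, the short exact sequence $0 \to W(A) \xrightarrow{\times \vartheta} W(A) \to A^\natural \to 0$ together with the snake lemma applied modulo $p^n$ shows that $A^\natural$ is $p$-adically complete; the quotient identifies as $A^\natural/p A^\natural \cong A/p^\flat A$, so the Frobenius on $A^\natural/p A^\natural$ is surjective because $A$ is perfect; and the image $\varpi$ of $\theta_A((p^\flat)^{1/p})$ in $A^\natural$ satisfies $\varpi^p = p$ and induces, via Frobenius, an isomorphism $A^\natural/\varpi A^\natural \cong A^\natural/p A^\natural$. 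Hence $A^\natural$ is integral perfectoid, and functoriality is immediate from functoriality of $W(\cdot)$. The identity $(T^\flat)^\natural \cong T$ is precisely Proposition \ref{Teichmuller}(2). For the reverse identity $(A^\natural)^\flat \cong A$, I would check that the natural map is an isomorphism modulo $p^\flat$ (both sides being identified with $A^\natural/p A^\natural = A/p^\flat A$ by the perfectoid property), and then invoke the Frobenius-iteration trick from the proof of Corollary \ref{AbBig}: since both sides are perfect, the map is an isomorphism modulo every $(p^\flat)^{p^k}$, hence an isomorphism after $p^\flat$-adic completion, and both rings are already $p^\flat$-adically complete.

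To transfer the big Cohen-Macaulay property, I would use the isomorphism $\Phi_T \colon T^\flat/p^\flat T^\flat \xrightarrow{\sim} T/pT$ from Proposition \ref{FontaineRing}(2), under which $x_i^\flat$ maps to $x_i \bmod p$. Proposition \ref{FontaineRing}(2) also tells us $p^\flat$ is a nonzero divisor on $T^\flat$, while $p$ is a nonzero divisor on the integral perfectoid algebra $T$ by definition. Hence regularity of $p, x_2, \ldots, x_d$ on $T$ reduces to regularity of $x_2, \ldots, x_d$ on $T/pT$, which under $\Phi_T^{-1}$ corresponds to regularity of $x_2^\flat, \ldots, x_d^\flat$ on $T^\flat/p^\flat T^\flat$, which in turn lifts to regularity of the full sequence $p^\flat, x_2^\flat, \ldots, x_d^\flat$ on $T^\flat$. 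The same isomorphism $\Phi_T$ identifies the quotients by the full parameter sequences on both sides, so the non-triviality condition $(p, x_2, \ldots, x_d) T \ne T$ transfers as well.

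The main obstacle will be making the inverse identity $(A^\natural)^\flat \cong A$ fully functorial for arbitrary $p^\flat$-adically complete perfect $S^\flat$-algebras $A$, because Propositions \ref{FontaineRing} and \ref{Teichmuller} are formulated for integral pre-perfectoid algebras rather than for perfect $\mathbb{F}_p$-algebras on the tilted side. The natural workaround is to note that $A^\natural$ is itself integral perfectoid by the previous step, so applying the paper's propositions to $A^\natural$ legitimately recovers $A$; the naturality in $A$ then follows from the functoriality of $W(\cdot)$, $\theta_{(\cdot)}$, and the quotient by $\vartheta$, all of which commute with ring maps $A \to A'$.
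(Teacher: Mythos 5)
Your proposal is correct and follows essentially the same route as the paper, whose proof is a one-line pointer to the quasi-inverse $T \mapsto W(T)/\vartheta\, W(T)$ constructed in the proof of Theorem \ref{integralperfect}. You simply spell out the details that the paper leaves implicit (the two composite identities $(T^\flat)^\natural \cong T$ and $(A^\natural)^\flat \cong A$, and the transfer of the regular-sequence condition through $T^\flat/p^\flat T^\flat \cong T/pT$), all of which check out.
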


\begin{proof}
Let $T$ be a $p^\flat$-adically complete perfect big Cohen-Macaulay $S^\flat$-algebra with respect to the sequence $\mathbf{x}^\flat$. Then as in the proof of Theorem \ref{integralperfect}, the quasi-inverse of the above functor is given by
$$
T \mapsto \frac{W(T)}{\vartheta \cdot W(T)}
$$
and the target ring is an integral perfectoid big Cohen-Macaulay $S$-algebra.
\end{proof}

\section{Comments and open problems}

After this paper was written, there has been remarkable progresses on the homological conjectures in mixed characteristic. Among them, we want to remark that Andr\'e established the following theorem in \cite{An3}, using the results of this paper.\footnote{Quite recently, O. Gabber also announced a similar result, using his weak local uniformization theorem and ultra-products of rings.}

\begin{theorem}[Weak functoriality]
Let $(R,\fm) \to (S,\fn)$ be a local homomorphism of complete Noetherian local domains such that $R$ has mixed characteristic $p>0$. Then there exists a commutative diagram
$$
\begin{CD}
R @>>> S \\
@VVV @VVV \\
R^+ @>>> S^+ \\
@VVV @VVV \\
\mathscr{B}(R) @>>> \mathscr{B}(S) \\
\end{CD}
$$
such that $\mathscr{B}(R)$ is an integral perfectoid big Cohen-Macaulay $R$-algebra, and $\mathscr{B}(S)$ is an integral perfectoid big Cohen-Macaulay $S$-algebra (resp. perfect big Cohen-Macaulay $S$-algebra), if $S$ has mixed characteristic $p>0$ (resp. positive characteristic $p>0$).
\end{theorem}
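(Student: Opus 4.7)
The plan is to run the construction of Theorem \ref{integralperfect} for $R$ and $S$ in parallel, making every choice compatibly so that the functoriality of tilting and of the Witt-vector functor produces the required map $\mathscr{B}(R) \to \mathscr{B}(S)$. First I would perform the faithfully-flat and normalization reductions at the beginning of the proof of Theorem \ref{integralperfect} in a way compatible with $R \to S$, so as to assume that both $R$ and $S$ are complete Noetherian local normal domains with perfect residue fields. Then I would fix an embedding of a chosen algebraic closure of $\Frac(R)$ into one of $\Frac(S)$ to get $R^+ \to S^+$ sitting over $R \to S$. Next, choose a system of parameters $p, x_2, \ldots, x_d$ of $R$ and a system of parameters $p, y_2, \ldots, y_e$ of $S$ both starting with $p$, together with compatible systems of $p^n$-th roots of these elements in $R^+$ and $S^+$, arranged so that the roots of $p$ and of each $x_i$ chosen in $R^+$ are mapped by $R^+ \to S^+$ to the previously chosen roots of their images in $S^+$.

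With these compatible choices in hand, I would run Theorem \ref{seedFontaine} in parallel for $R$ and $S$. The assignment $R \mapsto \overline{R}$ (the integral closure of $R$ in the maximal \'etale extension of $R_{\infty}[\tfrac{1}{p}]$ inside $R^+[\tfrac{1}{p}]$) is functorial in the ring extension once the root systems are chosen compatibly, so one obtains a map $\overline{R} \to \overline{S}$ sitting over $R^+ \to S^+$. Tilting is functorial, so this yields a commutative square of Fontaine rings $\overline{R}^\flat \to \overline{S}^\flat$ sitting above the small Fontaine rings $A_R^\flat := \mathbb{F}_p[[p^\flat, x_2^\flat, \ldots, x_d^\flat]]$ and $A_S^\flat := \mathbb{F}_p[[p^\flat, y_2^\flat, \ldots, y_e^\flat]]$ from Lemma \ref{SmallFontaine}, and by Theorem \ref{seedFontaine} each of $\overline{R}^\flat$ and $\overline{S}^\flat$ is a seed over the corresponding small Fontaine ring.

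The main obstacle is the next step: producing, compatibly, perfect big Cohen-Macaulay algebras $\mathscr{B}(\overline{R}^\flat)$ over $A_R^\flat$ and $\mathscr{B}(\overline{S}^\flat)$ over $A_S^\flat$ fitting in a commutative square over $\overline{R}^\flat \to \overline{S}^\flat$. In equal characteristic $p$ this is accessible because Hochster's method of (partial) algebra modifications and Dietz's seed-to-big-CM machinery (Theorem \ref{seed}) are sufficiently flexible. Concretely, I would first invoke Theorem \ref{seed} on the seed $\overline{S}^\flat$ to build an absolutely integrally closed perfect big Cohen-Macaulay $A_S^\flat$-algebra $\mathscr{B}(\overline{S}^\flat)$, and then construct $\mathscr{B}(\overline{R}^\flat)$ by running the seed-to-big-CM procedure on the composite $\overline{R}^\flat \to \overline{S}^\flat \to \mathscr{B}(\overline{S}^\flat)$, viewed as a seed over $A_R^\flat$, in such a way that every algebra modification of a forced relation for a system of parameters of $A_R^\flat$ is resolved inside a larger big Cohen-Macaulay $A_S^\flat$-algebra extending $\mathscr{B}(\overline{S}^\flat)$. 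The genuine difficulty I expect is to keep the two seed-to-big-CM constructions compatible across all iterations of modifications — this is a characteristic-$p$ weak functoriality statement, essentially requiring that Dietz's construction can be performed relatively over a fixed ambient big Cohen-Macaulay algebra, rather than existentially.

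Finally, I would apply the Witt-vector construction from Proposition \ref{Teichmuller}. Setting
$$
\mathscr{B}(R) := \frac{W(\mathscr{B}(\overline{R}^\flat))}{\vartheta_R \cdot W(\mathscr{B}(\overline{R}^\flat))} \quad \text{and} \quad \mathscr{B}(S) := \frac{W(\mathscr{B}(\overline{S}^\flat))}{\vartheta_S \cdot W(\mathscr{B}(\overline{S}^\flat))},
$$
the argument of Theorem \ref{integralperfect} shows that $\mathscr{B}(R)$ and $\mathscr{B}(S)$ are integral perfectoid big Cohen-Macaulay $R$- and $S$-algebras respectively when $S$ has mixed characteristic. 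The functoriality of $W(-)$, together with the fact that the compatible choice of $p^n$-th roots of $p$ makes $\vartheta_R$ land on $\vartheta_S$ under $W(\mathscr{B}(\overline{R}^\flat)) \to W(\mathscr{B}(\overline{S}^\flat))$, produces the required map $\mathscr{B}(R) \to \mathscr{B}(S)$ above $R^+ \to S^+$. If $S$ instead has equal characteristic $p$, one simply takes $\mathscr{B}(S) := \mathscr{B}(\overline{S}^\flat)$, and the map $\mathscr{B}(R) \to \mathscr{B}(S)$ is induced by the composite $\mathscr{B}(R) \twoheadrightarrow \mathscr{B}(R)/p\mathscr{B}(R) \cong \mathscr{B}(\overline{R}^\flat)/p^\flat\mathscr{B}(\overline{R}^\flat) \to \mathscr{B}(\overline{S}^\flat) = \mathscr{B}(S)$, using that $p$ is zero in $S$.
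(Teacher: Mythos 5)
A preliminary but important remark: the paper does not prove this theorem. It appears in the closing section as a result that Andr\'e established afterwards in \cite{An3} ``using the results of this paper'', so there is no internal proof to compare yours against; what you are attempting is in effect a reconstruction of \cite{An3}. Your overall strategy --- run the tilting construction of Theorems \ref{seedFontaine} and \ref{integralperfect} in parallel for $R$ and $S$ and untilt a compatible pair of perfect big Cohen--Macaulay algebras via Proposition \ref{Teichmuller} --- is the right shape and is broadly the route Andr\'e takes, but as written it has genuine gaps.

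First, the step you yourself flag as ``the genuine difficulty'' --- producing a commutative square of perfect big Cohen--Macaulay algebras under $\overline{R}^\flat \to \overline{S}^\flat$ --- is the entire mathematical content of the theorem once the tilting formalism is in place. It is not enough to say Dietz's machinery is ``sufficiently flexible'': one must actually invoke the characteristic-$p$ weak functoriality of big Cohen--Macaulay algebras (Hochster--Huneke \cite{HH95}) together with Dietz's base-change and tensor-product results for seeds in \cite{Di07}, and verify they apply to the map of small Fontaine rings at hand; left as an expectation, the proof is incomplete at its central point. Second, your construction of $R^+ \to S^+$ by embedding an algebraic closure of $\Frac(R)$ into one of $\Frac(S)$ fails whenever $R \to S$ is not injective, which is permitted here; the correct move is to extend $R \to S \to S^+$ to $R^+ \to S^+$ using that $S^+$ is absolutely integrally closed. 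For the same reason the images of $p,x_2,\ldots,x_d$ in $S$ need not form part of a system of parameters of $S$ (some may vanish), so the two small Fontaine rings of Lemma \ref{SmallFontaine} cannot in general be related by a map carrying a system of parameters to a system of parameters, and the compatibility of the towers $R_\infty$ and $S_\infty$ cannot be arranged as you state. Third, when $S$ has equal characteristic $p$ the objects $S_\infty[\frac{1}{p}]$, $\overline{S}$ and the maximal \'etale extension used in Theorem \ref{seedFontaine} are not defined (since $p=0$ in $S$), so the ``parallel run'' breaks down at the outset; that case must be split off at the very beginning and handled directly over $S$ by \cite{HH95} and Theorem \ref{seed}, with the bridge to $\mathscr{B}(R)$ built through $\mathscr{B}(R)/p\mathscr{B}(R) \cong \mathscr{B}(\overline{R}^\flat)/p^\flat\mathscr{B}(\overline{R}^\flat)$ as you indicate only at the very end.
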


Refer to the paper \cite{DRG} for a recent progress made on the weakly functorial big Cohen-Macaulay algebras in the equal characteristic zero case, using ultra-products. In \cite{HM17}, Heitmann and Ma proved that if $R$ is a regular local ring of mixed characteristic, then the direct summands of $R$ are pseudo-rational; in particular, they are Cohen-Macaulay. In \cite{MS18}, Ma and Schwede established a comparison theorem of symbolic and ordinary powers of ideals in an excellent regular ring. In the case when the ring has equal characteristic, the same result had been obtained earlier by Ein-Lazarsfeld-Smith and Hochster-Huneke (see \cite{MS18} for a brief history on these results). We also mention the work of Bhatt, Iyenagr and Ma \cite{BIM18} on the mixed characteristic analogue of Kunz's theorem on the regularity of Noetherian $\mathbb{F}_p$-algebras via the flatness of the Frobenius map.

We propose the following problem, which seems to be an important step to consider tight closure-like operations in mixed characteristic. Some relevant results appear in a recent paper \cite{HM18}.

\begin{problem}
\label{Pro2}
Let $(R,\fm)$ be a complete Noetherian local (normal) domain of mixed characteristic and assume that $A$ and $B$ are integral perfectoid big Cohen-Macaulay $R$-algebras. Then does $A \otimes_R B$ map to an integral perfectoid big Cohen-Macaulay $R$-algebra?
\end{problem}

The paper \cite{HM18} has partial results on the following problem.

\begin{problem}
Let $(R,\fm)$ be a complete Noetherian local domain of mixed characteristic and let $T$ denote the $p$-adic completion of the absolute integral closure $R^+$. Then is $T$ an almost Cohen-Macaulay algebra?
\end{problem}

We refer the interested reader to \cite{An4} and \cite{Sh18}, both of which give accounts for ideas surrounding Andr\'e's Perfectoid Abhyankar's Lemma and applications to the homological conjectures.

\begin{acknowledgement}
I am grateful to Bhargav Bhatt, Raymond Heitmann, Kiran Kedlaya, Linquan Ma, Kei Nakazato and Paul Roberts for useful comments. My special gratitude goes to Yves Andr\'e for his numerous and kind comments and for his inspiration. Finally, I am grateful to the referee for pointing out errors and providing remarks.
\end{acknowledgement}

\end{document}